\documentclass{amsart}

\usepackage{epic}
\usepackage{fullpage}

\title{An elliptic hypergeometric beta integral transformation}
\author{Fokko van de Bult \\ California Institute of Technology}

\newcommand{\II}{I\!I}

\newcommand{\III}{I\!I\!I}

\newtheorem{thm}{Theorem}[section]
\newtheorem{cor}[thm]{Corollary}
\newtheorem{lemma}[thm]{Lemma}

\begin{document}

\begin{abstract}
In this article we prove a new elliptic hypergeometric integral identity. It previously appeared (as a conjecture) in articles by Rains \cite{REL}, and Spiridonov and Vartanov \cite{SV}. Moreover it gives a different proof of an identity in another article by Rains \cite{Rtrafo}.
We also give some basic hypergeometric and classical limits of this identity. The classical limit gives identities (some known, some new) between generalizations of the Selberg integral.
\end{abstract}

\maketitle

\section{Introduction}
The subject of elliptic hypergeometric functions is relatively new. It started when Frenkel and Turaev \cite{FT} obtained the first
elliptic hypergeometric series evaluation. Even though the theory of elliptic hypergeometric functions (series and integrals) has not yet been as well studied as the theory of classical and basic hypergeometric functions, steadily more and more is known about these functions. A recent overview of these results is given in \cite{Spiressays}. 

In this short article we prove a multivariate elliptic hypergeometric integral identity. It first appeared in 
a paper by Eric Rains \cite[(8.14)]{Rtrafo}, though in a non-explicit form. In this paper Rains defines certain integral and difference operators. He proves that two integral operators commute on a certain space of functions, by giving a complete set of eigenfunctions (the biorthogonal functions of that paper) for these integral operators. A direct proof of the commutativity requires an elliptic hypergeometric transformation, which is a special case of our main theorem, Theorem \ref{thmmain}. 

Our main theorem appeared again as Conjecture 1 in another paper by Rains \cite{REL}. In this case it is obtained as an integral analogue of a 
elliptic hypergeometric series identity (which Rains proves). This series identity is a special case of the complementation symmetry for 
the skew interpolation function defined there. This series identity can also be seen as a kind of dual Karlsson-Minton sum. 

Very recently, the conjecture also appeared in a paper by Spiridonov and Vartanov \cite{SV}. In their case the equation we prove provides the equality between the superconformal indices of two (conjecturally) dual supersymmetric quantum field theories. 

The latter half of this article derives some basic hypergeometric integral identities which can easily be obtained as limits from the main theorem. For the limit $p\to 0$ the resulting identities are transformations between multivariate $q$-beta integrals. These integral transformations appear to be new. The integrals obtained are similar in flavor to integrals associated to root systems considered by Gustafson (for example in \cite{Gustafson}). The limit $q\to 1$ gives certain generalizations of the Selberg integral, which were studied before in for example \cite{Yan}. One of the classical limits is a known formula, which generalizes Euler's transformation for beta integrals.

The article is organized as follows. In Section \ref{secprelim} we define the notations and give the key theorems used in the proof of the main theorem. In Section \ref{secmain} we state and prove our main elliptic hypergeometric identity. The last two sections cover some basic hypergeometric, respectively classical, limits.

\subsection*{Acknowledgments} I would like to thank E.M. Rains for our interesting discussions. I also like to thank V.P. Spiridonov for his remarks.

\section{Preliminaries}\label{secprelim}
Throughout the article $p$ and $q$ denote complex numbers of modulus less than one (i.e. $|p|,|q|<1$). 

The $q$-shifted factorials and related theta functions are defined as
\[
(x;q)= \prod_{r=0}^{\infty} (1-xq^r), \qquad \theta(x;q) = (x;q) (q/x;q).
\]
The infinite product converges as $|q|<1$. We use the usual abbreviations $(a_1,a_2,\ldots,a_n;q) = \prod_{i=1}^n (a_i;q)$ and $(az^{\pm 1};q) = (az, a/z;q)$.

The elliptic gamma function \cite{Ruijs} is defined as  the doubly infinite product
\[
\Gamma(z):= \Gamma(z;p,q) = \prod_{r,s\geq 0} \frac{1-p^{r+1}q^{s+1}/z}{1-p^rq^sz}.
\]
We will generally only be concerned with the elliptic gamma function, so no confusion should arise with the 
classical Euler gamma function. However in the section on classical limits we do need both gamma functions, so in that section we use $\Gamma_e$ for the elliptic gamma function and $\Gamma_c$ for the classical gamma function.

We will use similar abbreviations for the elliptic gamma function as for the $q$-shifted factorials. The elliptic gamma function
satisfies the difference equations
\[
\Gamma(pz;p,q) = \theta(z;q) \Gamma(z;p,q), \qquad \Gamma(qz;p,q) = \theta(z;p) \Gamma(z;p,q).
\]
and the following reflection equation
\begin{equation}\label{eqrefl}
\Gamma(z) \Gamma(pq/z) = 1.
\end{equation}

Observe the following limits to $q$-shifted factorials
\[
\lim_{p\to 0} \Gamma(z;p,q) = \frac{1}{(z;q)}, \qquad \lim_{p\to 0} \Gamma(pz;p,q) = (q/z;q).
\]

We define the integration kernels
\begin{align*}
\Delta_I^{(n)}(z) & =\frac{1}{\prod_{1\leq i<j\leq n} \Gamma(z_i^{\pm 1} z_j^{\pm 1}) \prod_{i=1}^n \Gamma(z_i^{\pm 2})} \\
\Delta_{\II}^{(n)}(t;z) &= \frac{\prod_{1\leq i<j\leq n} \Gamma(t z_i^{\pm 1} z_j^{\pm 1}) }{\prod_{1\leq i<j\leq n} \Gamma(z_i^{\pm 1} z_j^{\pm 1}) \prod_{i=1}^n \Gamma(z_i^{\pm 2})} 
\end{align*}
and the constants
\[
P_{BC_n} = \frac{(p;p)^n (q;q)^n}{2^n n!}, \qquad P_{A_{n-1}} = \frac{(p;p)^n (q;q)^n}{n!}
\]

The following theorem is the Elliptic Dixon transformation proven by Rains \cite{Rtrafo}.
\begin{thm}\label{thmselberg}
Under the balancing condition $\prod_{r=0}^{2n+2m+3} t_r = (pq)^{m+1}$ we have 
\begin{multline*}
P_{BC_n} \int_{C^n} \Delta_I^{(n)}(z) \prod_{i=1}^n \prod_{r=0}^{2n+2m+3} \Gamma(t_r z_i^{\pm 1})  \frac{dz_i}{2\pi i z_i} 
\\= 
P_{BC_m} \prod_{0\leq r<s \leq 2n+2m+3} \Gamma(t_rt_s)
\int_{C^m} \Delta_I^{(m)}(z)
\prod_{i=1}^m \prod_{r=0}^{2n+2m+3} \Gamma(\frac{\sqrt{pq}}{t_r} z_i^{\pm 1}) \frac{dz_i}{2\pi i z_i} 
\end{multline*}
The contours of the integrals are unit circles for parameters $|\sqrt{pq}|<|t_r|<1$, and otherwise we view it as an identity between the analtytic extensions as meromorphic functions in the $t_r$ of these functions.
\end{thm}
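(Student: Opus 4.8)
\emph{Proof proposal.} I would prove the identity by a double application of the principle ``both sides satisfy the same difference equations and collapse to a smaller case'', with the univariate elliptic beta integral (the instance $n=1$, $m=0$, in which $\Delta_I^{(1)}(z)=1/\Gamma(z^{\pm 2})$) as the ultimate input. First note that substituting $t_r\mapsto\sqrt{pq}/t_r$ for every $r$ and using the reflection equation \eqref{eqrefl} together with the balancing condition turns the asserted identity for the pair $(n,m)$ into the asserted identity for $(m,n)$; so it suffices to treat $n\ge m$ and to induct on $m$. For $m=0$ the right-hand integral is empty and the statement becomes the elliptic Selberg (Dixon-type) evaluation
\[
P_{BC_n}\int_{C^n}\Delta_I^{(n)}(z)\prod_{i=1}^n\prod_{r=0}^{2n+3}\Gamma(t_rz_i^{\pm 1})\frac{dz_i}{2\pi i z_i}=\prod_{0\le r<s\le 2n+3}\Gamma(t_rt_s)
\]
under $\prod_{r=0}^{2n+3}t_r=pq$, which I would in turn obtain by an inner induction on $n$: the case $n=0$ is three applications of \eqref{eqrefl}, and the case $n=1$ is the univariate elliptic beta integral.

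Both inductive steps use the same engine. Fix all parameters but $t_0$ generically, letting $t_1$ compensate so that the balancing condition holds identically, and regard each side as a meromorphic function of $t_0$. Applying the difference equation $\Gamma(pz)=\theta(z;q)\Gamma(z)$ to the integrands and deforming the contours back — for parameters in an open subset of the convergence region no poles are crossed, and the residues that could appear cancel against those coming from the variable $t_1$ — one checks that under $t_0\mapsto pt_0$, and likewise under $t_0\mapsto qt_0$, each side is multiplied by the \emph{same} explicit product of theta functions. Hence the ratio $F$ of the two sides is a $p$- and $q$-elliptic function of $t_0$. Its divisor is supported on the finitely many (modulo $p$ and $q$) values of $t_0$ at which one of the two integrals has a pinching contour or at which the prefactor $\prod_{r<s}\Gamma(t_rt_s)$ has a zero or a pole; a case check shows these cancel, so $F$ is holomorphic and nowhere zero, hence constant in $t_0$. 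Running the same argument in each remaining parameter shows $F$ is a constant depending only on $p,q,n,m$.

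It remains to pin down that constant, and this is where the induction closes. Specialise $t_0t_{2n+2m+3}\to pq$. On the left the two factors $\Gamma(t_0z_i^{\pm 1})\Gamma(t_{2n+2m+3}z_i^{\pm 1})$ collapse to $1$ by \eqref{eqrefl}, so the left side becomes verbatim the left side of the $(n,m-1)$ identity (the surviving $2n+2m+2$ parameters now multiply to $(pq)^m$). On the right the prefactor acquires the simple zero $\Gamma(t_0t_{2n+2m+3})=\Gamma(pq)=0$; this is cancelled by a simple pole of the $BC_m$ integral, produced as $t_0t_{2n+2m+3}\to pq$ by the collision of the pole of $\Gamma(\sqrt{pq}\,w_i^{-1}/t_0)$ at $w_i=\sqrt{pq}/t_0$ (inside the contour) with the pole of $\Gamma(\sqrt{pq}\,w_i/t_{2n+2m+3})$ at $w_i=t_{2n+2m+3}/\sqrt{pq}$ (outside the contour). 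Computing this residue — it is an $(m-1)$-fold integral, and after rewriting the leftover $\Gamma$-factors by \eqref{eqrefl} it is exactly the right side of the $(n,m-1)$ identity (up to the elementary factor relating $P_{BC_m}$ to $P_{BC_{m-1}}$) — and invoking the inductive hypothesis fixes the constant to be $1$. For the $m=0$ evaluation one argues the same way with $t_0t_{2n+3}\to pq$, reducing $n$ to $n-1$.

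The main obstacle is precisely this residue computation: one must verify that the limiting residue of the $BC_m$ integral really is the lower-dimensional integral of exactly the claimed shape, with the correct balancing and normalising constant — which, repackaged, is a small instance of the very identity being proved, so the bookkeeping must be done with care. A comparable (though easier) vigilance is needed in the contour-deformation step used to establish the difference equations, where one must exhibit an explicit open set of parameters for which the deformation is unobstructed. Once these two technical points are settled the induction runs and the theorem follows. (An alternative would be to extract the identity from the theory of elliptic biorthogonal functions via a Cauchy-type kernel symmetry, but that imports considerably more machinery, so I would prefer the route above.)
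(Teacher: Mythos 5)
First, a point of orientation: the paper does not prove Theorem \ref{thmselberg} at all --- it is imported verbatim from Rains \cite{Rtrafo}, where it is established with the machinery of difference and integral operators and interpolation functions. So there is no ``paper proof'' to match; your attempt has to be judged on its own. Parts of your skeleton are sound and standard: the involution $t_r\mapsto\sqrt{pq}/t_r$ does exchange the $(n,m)$ and $(m,n)$ statements via \eqref{eqrefl}, and the degeneration $t_0t_{2n+2m+3}\to pq$, with the contour of the $BC_m$ integral pinched between the pole of $\Gamma(\sqrt{pq}w^{-1}/t_0)$ and that of $\Gamma(\sqrt{pq}w/t_{2n+2m+3})$, genuinely does reduce $(n,m)$ to $(n,m-1)$ by the residue calculus of \cite[Prop.~7.1]{vDS}.

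The genuine gap is the step that carries all the weight: the claim that under $t_0\mapsto pt_0$, $t_1\mapsto t_1/p$ ``each side is multiplied by the same explicit product of theta functions,'' obtained by applying $\Gamma(pz)=\theta(z;q)\Gamma(z)$ to the integrand and deforming contours. That shift multiplies the integrand by $\prod_i\theta(t_0z_i^{\pm1};q)/\theta(t_1z_i^{\pm1}/p;q)$, which is a nonconstant function of the $z_i$ (it is not even $q$-elliptic in $z_i$ for generic $t_0,t_1$). No contour deformation turns an integral of (nonconstant function)$\times$(integrand) into a scalar multiple of the original integral; residues are not the issue. Two-term contiguous relations for these integrals do exist, but proving them is precisely the hard content: one must use the theta addition formula to write the difference of the two integrands, suitably normalized, as a total $q$-difference $g(qz)-g(z)$ whose contour integral vanishes --- this is Spiridonov's method in the univariate case \cite{Spir1,Spir2}, and its $BC_n$ extension is a substantial theorem, not a manipulation of the gamma difference equation. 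Until that is supplied, the assertion that the ratio of the two sides is $p$- and $q$-elliptic in $t_0$ (and hence constant, which additionally needs $p,q$ multiplicatively independent plus the unverified ``case check'' that the divisors cancel) has no support, and the induction does not start.

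If you want a route that avoids difference equations entirely, derive the transformation from its $m=0$ case (the Dixon/Selberg evaluation, which you may take as known) by the double-integral trick: insert into the $BC_n$ integrand a factor $\prod_{i,j}\Gamma(cz_i^{\pm1}w_j^{\pm1})$ recognized as the output of an $m=0$ evaluation in the $w$ variables, interchange the orders of integration, and evaluate/transform the inner integral the other way. This is Gustafson's original strategy \cite{Gustafson} and is exactly the hexagon-of-double-integrals technique (Figure \ref{fig1}) that the present paper uses to prove its own Theorem \ref{thmmain}; it replaces your elliptic-function argument by Fubini plus the already-known evaluation, leaving only the contour bookkeeping to check.
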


An important special case is obtained by setting $m=0$, in which case the transformation reduces to an evaluation (as the right hand side becomes an integral over 0 variables). This case was conjectured by Van Diejen and Spiridonov \cite{vDS}, the latter author also having proved the special case $n=1$, $m=0$ in \cite{Spir1}. An independent proof of this case is found in \cite{Spir2}. A basic hypergeometric analogue was already given by Gustafson \cite{Gustafson}. The relevant equation is explicitly given by
\[
P_{BC_n} \int_{C^n} \Delta_I^{(n)}(z) \prod_{i=1}^n \prod_{r=0}^{2n+3} \Gamma(t_r z_i^{\pm 1})  \frac{dz_i}{2\pi i z_i} 
= 
 \prod_{0\leq r<s \leq 2n+3} \Gamma(t_rt_s).
\]

The elliptic Selberg evaluation and transformation \cite{Rtrafo} given below are other important elliptic hypergeometric integral identities. The evaluation was again conjectured (and shown to be a consequence of the elliptic Dixon evaluation) in \cite{vDS}.
\begin{thm}\label{thmdixon}
Under the balancing condition $t^{2(n-1)}t_1t_2t_3t_4t_5t_6=pq$ we have
\[
P_{BC_n} \Gamma(t)^n \int_{C^n} \Delta_{\II}^{(n)}(t;z) \prod_{i=1}^n \prod_{r=1}^6 \Gamma(t_rz_i^{\pm 1}) \frac{dz_i}{2\pi i z_i} = 
\prod_{i=0}^{n-1} \prod_{1\leq r<s\leq 6} \Gamma(t^i t_rt_s).
\]
Under the balancing condition $t^{2(n-1)} \prod_{r=1}^8 t_r=(pq)^2$ we have
\begin{multline*}
P_{BC_n} \Gamma(t)^n \int_{C^n} \Delta_{\II}^{(n)}(t;z) \prod_{i=1}^n \prod_{r=1}^8 \Gamma(t_rz_i^{\pm 1}) \frac{dz_i}{2\pi i z_i} \\ = 
\prod_{i=0}^{n-1} \prod_{r=1}^4 \prod_{s=5}^8 \Gamma(t^i t_rt_s)
P_{BC_n} \Gamma(t)^n \int_{C^n} \Delta_{\II}^{(n)}(t;z) \prod_{i=1}^n \prod_{r=1}^4 \Gamma(vt_rz_i^{\pm 1}) \prod_{r=5}^8 \Gamma(t_r/vz_i^{\pm 1}) \frac{dz_i}{2\pi i z_i} 
\end{multline*}
where $v^2 = pq/t^{n-1}t_1t_2t_3t_4 = t^{n-1}t_5t_6t_7t_8/pq$.

The contours in these identities are taken as unit circles if $|t_r|<1$ ($1\leq r\leq 8$) and $|vt_r|<1$ ($1\leq r\leq 4$) and $|t_r/v|\leq 1$ 
($5\leq r\leq 8$), and otherwise we regard it as an equation between the analytic extensions of these integrals (as meromorphic function of $t_r$, $t$, $p$ and $q$).
\end{thm}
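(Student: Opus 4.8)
The plan is to establish the evaluation first, by induction on $n$, and then to deduce the transformation from it. The base case $n=0$ is the empty identity $1=1$, and $n=1$ is (after accounting for the constant $\Gamma(t)^n$) Spiridonov's elliptic beta integral, which we take as known. For the inductive step I would follow Spiridonov's scheme for the one-variable integral: writing $\rho_n$ and $R_n$ for the two sides of the first identity, fix $p$, $q$, $t$ and all but one of the $t_r$, say $t_6$ (which is then determined by the balancing condition), and regard $\rho_n/R_n$ as a meromorphic function of $t_1$. Using the difference equation $\Gamma(pz;p,q)=\theta(z;q)\Gamma(z;p,q)$, together with the balancing condition to trade a $p$-shift (and a $q$-shift) of $t_1$ for a compensating shift of $t_6$, one checks that $\rho_n/R_n$ is invariant under $t_1\mapsto pt_1$ and under $t_1\mapsto qt_1$, hence is an elliptic function of $t_1$. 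It then suffices to show it has no poles, after which it is a constant, and one pins the constant down by a degeneration of the parameters.

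The absence of poles is where the induction enters. The poles of $\rho_n$ in $t_1$ arise from the $n$-dimensional contour being pinched between colliding factors (such as $\Gamma(t_1z_i^{\pm1})$ against $\Gamma(t_rz_i^{\pm1})$ or $\Gamma(tz_i^{\pm1}z_j^{\pm1})$), and the corresponding residue is, after freezing one variable at the offending point, a constant times an $(n-1)$-dimensional type II integral of the same shape with shifted parameters. By the inductive hypothesis these residues are explicit products of elliptic gamma functions, and one verifies that they match the residues of $R_n$; thus $\rho_n/R_n$ is holomorphic, hence constant, in $t_1$. Repeating for each $t_r$ shows the quotient is a parameter-independent constant, and letting two of the $t_r$ degenerate so that a pair of gamma factors cancels and the innermost integration collapses to a residue (reducing the whole integral to the $(n-1)$-variable case) shows the constant equals $1$. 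Throughout one argues first in the regime where the contours are unit circles and then appeals to the stated meromorphic continuation.

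There is a more algebraic route to the evaluation that uses the type I results directly, and I expect its key step to be the main obstacle of the whole argument. It rests on an ``Anderson-type'' connection integral: an identity expressing $\Delta_{\II}^{(n)}(t;x)\prod_i\prod\Gamma(\,\cdot\,x_i^{\pm1})$, up to an explicit product of gamma functions, as the integral of the type I kernel $\Delta_I^{(n)}(z)$ against a cross-kernel $\prod_{i,j}\Gamma(\sqrt{t}\,z_i^{\pm1}x_j^{\pm1})$ and a residual product $\prod_i\prod\Gamma(\,\cdot\,z_i^{\pm1})$. The numerator factors $\prod_{i<j}\Gamma(tx_i^{\pm1}x_j^{\pm1})$ of $\Delta_{\II}$ then arise precisely as the products of pairs $(\sqrt{t}\,x_i)(\sqrt{t}\,x_j)$ produced when the inner $z$-integral is evaluated by the $m=0$ case of Theorem~\ref{thmselberg}. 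Granting this, one inserts it into $\rho_n$, interchanges the order of integration, and is left with a type I integral to which the $m=0$ Dixon evaluation applies. Establishing the connection integral rigorously — the precise number of auxiliary variables, the residual parameters, the balancing condition, the overall constant, and a nonempty common domain of contours on which it is a genuine integral identity — is the delicate part; the remainder is bookkeeping with \eqref{eqrefl} and the two $\Gamma$-difference equations.

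Given the evaluation, the transformation is the $BC_n$ analogue of Spiridonov's $W(E_7)$ symmetry of the elliptic beta integral. For $n=1$ it follows from the six-parameter evaluation by his argument: form an auxiliary double integral over $z$ and $w$ with a cross-kernel $\Gamma(u\,z^{\pm1}w^{\pm1})$ and $u^2=pq/(t^{n-1}t_1t_2t_3t_4)=v^2$ chosen so that the inner $z$-integral is balanced, evaluate that integral by the six-parameter identity — which produces the prefactor $\prod_{r=1}^4\prod_{s=5}^8\Gamma(t_rt_s)$ and replaces $\{t_1,\ldots,t_4\}$ by $\{vt_1,\ldots,vt_4\}$ — and recognize the remaining $w$-integral as the right-hand side. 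For general $n$ this does not go through verbatim (inserting the cross-kernel raises the parameter count beyond eight), so instead I would either (a) combine the Anderson-type connection integral above with the \emph{full} Dixon transformation, lifting the eight-parameter type II integral to a type I integral, applying Theorem~\ref{thmselberg}, and lifting back — with $t^{2(n-1)}\prod_{r=1}^8 t_r=(pq)^2$ matching the $(pq)^{m+1}$-balancing once the cross-factors are absorbed — or (b) show by contiguous relations that both sides satisfy the same difference equations in the $t_r$ and agree on the codimension-one locus (e.g.\ $t_4t_8=pq$) on which both reduce to the six-parameter evaluation, forcing them to coincide. The recurring points of care in all of this are the interchange of integrations, the verification that no residues are crossed in the contour deformations (or the accounting of those that are), and the manipulation of products such as $\prod_{i=0}^{n-1}\prod_{r<s}\Gamma(t^it_rt_s)$ into the shape produced by each reduction step while keeping the balancing conditions consistent.
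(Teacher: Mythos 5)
This theorem is not proved in the paper at all: it is quoted as a known result of Rains \cite{Rtrafo} (the evaluation having been conjectured, and reduced to the elliptic Dixon evaluation, by van Diejen and Spiridonov \cite{vDS}). So there is no internal proof to compare against; what can be said is whether your sketch is a viable reconstruction. Both of your routes are recognizably the standard ones. Your ``Anderson-type connection integral'' is exactly the mechanism of \cite{vDS}, and it is also the engine of this paper's own Section~\ref{secmain}: there the cross term $\prod_{i<j}\Gamma(tz_i^{\pm1}z_j^{\pm1})$ times three of the $\Gamma(t_rz_i^{\pm1})$ is rewritten as an $m=0$ Dixon evaluation in $n$ (not $n-1$) auxiliary variables, with one extra free parameter $b$, the reflection equation \eqref{eqrefl} converting the unwanted denominator gammas into numerator parameters $b\sqrt{pq/t}\,z_i^{\pm1}$. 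This is worth emphasizing because the version you describe, with $n-1$ auxiliary variables carrying parameters $\sqrt{t}\,z_i^{\pm1}$ plus two extras, does not close up: after reflecting the two leftover denominator factors into the $z$-integrand you get $2n+6$ parameters rather than $2n+4$, and forcing a cancellation against $t_5,t_6$ imposes $t^{n-1}t_5t_6=pq$, which is incompatible with the balancing condition. The fix (take $n$ auxiliary variables, sacrifice three of the $t_r$ in favour of their $t$-reflections, and introduce the spare parameter $b$) is precisely the bookkeeping you flagged as ``the delicate part,'' so you have located the obstacle correctly but not resolved it.

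Two further points in your first route are genuine gaps rather than routine checks. First, $\rho_n/R_n$ is not quasi-periodic in $t_1$ for the reason you give: under $t_1\mapsto pt_1$, $t_6\mapsto t_6/p$ the integrand acquires the $z$-dependent factor $\prod_i\theta(t_1z_i^{\pm1};q)/\theta(t_6z_i^{\pm1}/p;q)$, so invariance of the integral is not a consequence of the $\Gamma$-difference equations alone; one needs a nontrivial contiguous relation coming from an elliptic partial-fraction identity (the $BC_n$ analogue of what Lemma~\ref{lemsym} provides), and this is the actual content of Spiridonov's ``short proof'' even for $n=1$. Second, your determination of the constant by degenerating two parameters is not yet an argument: setting $t_5t_6=pq$ cancels gamma factors but makes the right-hand side vanish identically (via $\Gamma(pq)=0$) without reducing the dimension of the integral; the dimension-reducing degenerations are the pinching ones, where a contour is trapped and a residue must be extracted, and controlling which poles pinch in the multivariate setting is the hard part of the residue approach. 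Finally, for the eight-parameter transformation at general $n$ your options (a) and (b) are stated at the level of intent only; the parameter count after inserting the cross-kernel genuinely changes the type of the inner integral (one lands in the $m\geq1$ cases of Theorem~\ref{thmselberg}), and organizing the resulting chain of identities is essentially the hexagon argument that this paper carries out for its main theorem. In short: the strategy is the right one, but each of the three steps you label as delicate is where the real proof lives, and none of them is yet supplied.
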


\section{The main theorem}\label{secmain}
In this section we will prove Conjecture 1 from \cite{REL}. This conjecture also appeared in Section 7 of \cite{SV} as a 
special case of the equality of the superconformal indices of two quantum field theories related by Seiberg duality.

Concretely, we will prove the following identity between elliptic Selberg integrals
\begin{thm}\label{thmmain}
Under the balancing conditions 
\[
t_0t_1t_2t_3=t^{2+m-n},  \qquad v_{2i}v_{2i+1} = \frac{pq}{t},\quad (0\leq i<k), \qquad  k=m+n
\]
 we have 
\begin{align*}
P_{BC_n} & \Gamma(t)^n \int_{C^n} 
\Delta_{\II}^{(n)}(t;z) 
\prod_{i=1}^n \prod_{r=0}^3 \Gamma(t_r z_i^{\pm 1}) \prod_{r=0}^{2k-1} \Gamma(v_r z_i^{\pm 1}) \frac{dz_i}{2\pi i z_i}
\\ & =
\prod_{i=m+1}^n \prod_{0\leq r<s\leq 3} \Gamma(t_rt_s t^{n-i}) 
\prod_{i=0}^{2k-1} \prod_{r=0}^3 \Gamma(t_r v_i)
\\ & \qquad \times P_{BC_m} \Gamma(t)^{m} \int_{C^{m}} 
\Delta_{\II}^{(m)}(t;z) 
\prod_{i=1}^{m} \prod_{r=0}^3 \Gamma(t t_r^{-1} z_i^{\pm 1}) \prod_{r=0}^{2k-1} \Gamma(v_r z_i^{\pm 1}) \frac{dz_i}{2\pi i z_i},
\end{align*}
where the integration contour is a product of unit circles if $|t|<|t_r|<1$ ($0\leq r\leq 3$), and $|v_r|<1$ ($0\leq r\leq 2k-1$) and we view this as an identity between the analytic extensions of these integrals otherwise.
\end{thm}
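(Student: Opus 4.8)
The plan is to prove Theorem \ref{thmmain} by induction on $k=m+n$, using the two identities of Theorem \ref{thmdixon} as the engine that adds or removes pairs of parameters $v_{2i},v_{2i+1}$ satisfying the constraint $v_{2i}v_{2i+1}=pq/t$. Concretely, one views the left-hand integral as an elliptic Selberg integral in $n$ variables, and repeatedly applies the elliptic Selberg \emph{transformation} (the second part of Theorem \ref{thmdixon}) to peel off a pair of the $v_r$ parameters, each time multiplying by the appropriate product of elliptic gamma functions $\prod \Gamma(t^i \cdot)$ from the prefactor. After exhausting all $k$ pairs one would arrive at an integral whose parameters are only the $t_r$ (and the $t^i$ powers), at which point the elliptic Selberg \emph{evaluation} (the first part of Theorem \ref{thmdixon}) — or rather its reversal — reintroduces them on the $m$-dimensional side. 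The base case will be either $n=0$ or $m=0$, where one side degenerates to an explicit product and the statement becomes a pure evaluation.

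First I would carefully set up the parameter bookkeeping: the balancing condition $t^{2(n-1)}\prod_{r=1}^8 t_r=(pq)^2$ of Theorem \ref{thmdixon} must be matched against $t_0t_1t_2t_3=t^{2+m-n}$ and $v_{2i}v_{2i+1}=pq/t$, so one needs to check that grouping $\{t_0,t_1,t_2,t_3,v_{2i},v_{2i+1}\}$ (with $n$ adjusted) indeed satisfies the octuple balancing, and that the auxiliary parameter $v$ in Theorem \ref{thmdixon} can be chosen so that $v t_r$ and $t_r/v$ produce exactly the shift $t_r\mapsto t/t_r$ that appears on the right-hand side. I expect the quantities $v_r$ in the main theorem and the auxiliary $v$ of Theorem \ref{thmdixon} to be essentially the same object up to relabeling, which is presumably why the notation was chosen this way. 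A subtlety is that at intermediate stages the "extra" spectator parameters $v_{2j},v_{2j+1}$ for $j\neq i$ are just along for the ride inside $\Delta_{\II}^{(n)}$, so one must confirm Theorem \ref{thmdixon} still applies with those present — it does, since they enter only as factors $\Gamma(v_r z_i^{\pm1})$ with no constraint of their own beyond the single pair being manipulated.

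The main obstacle, I expect, is \emph{dimension-changing}: Theorem \ref{thmdixon} preserves the number of integration variables $n$, whereas Theorem \ref{thmmain} relates an $n$-dimensional integral to an $m$-dimensional one. The resolution must come from the elliptic Dixon transformation, Theorem \ref{thmselberg}, which genuinely changes $n\leftrightarrow m$; so the real architecture is probably a hybrid: use Theorem \ref{thmselberg} once to switch $n$ to $m$ (this forces the reflection $t_r\mapsto \sqrt{pq}/t_r$ and generates the product $\prod\Gamma(t_rt_s)$), and use Theorem \ref{thmdixon} to clean up the resulting parameters into the form $t t_r^{-1}$ and to manage the $v_r$. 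Getting the two balancing conditions to be simultaneously compatible through this switch, and tracking the precise gamma-function prefactor $\prod_{i=m+1}^n\prod_{r<s}\Gamma(t_rt_st^{n-i})\prod_{i,r}\Gamma(t_rv_i)$ so that nothing is lost or double-counted, is where the bulk of the careful work lies. I would also need to verify at the end that the contour prescriptions match up under the analytic continuation, but that is routine once the meromorphic identity is established on the stated open parameter domain.
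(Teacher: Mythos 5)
Your central mechanism does not work. The elliptic Selberg transformation (the second identity of Theorem \ref{thmdixon}) is an identity for an integrand with \emph{exactly} eight parameters $t_1,\dots,t_8$ subject to $t^{2(n-1)}\prod_{r=1}^8 t_r=(pq)^2$; there is no ``spectator'' version of it. Your claim that the remaining pairs $v_{2j},v_{2j+1}$ ($j\neq i$) are ``just along for the ride'' because ``they enter only as factors $\Gamma(v_r z_i^{\pm1})$'' is false: inserting any extra $z$-dependent factors into the integrand destroys the identity (and simultaneously violates its balancing condition), so you cannot peel off the $v$-pairs one at a time with Theorem \ref{thmdixon}. Indeed, for $k>2$ the left-hand side of Theorem \ref{thmmain} has $4+2k>8$ parameters, and the whole content of the theorem is precisely that such a higher-parameter transformation holds when the extra parameters are constrained in pairs by $v_{2i}v_{2i+1}=pq/t$ --- it is not a corollary of the $8$-parameter case. (The relation between your auxiliary $v$ of Theorem \ref{thmdixon} and the $v_r$ of Theorem \ref{thmmain} is also not a relabeling: the former is a fixed square root determined by the $t_r$, the latter are free parameters.) Your base case is likewise off: the induction cannot start at $n=0$ or $m=0$ for arbitrary $k$, since the $m=0$ case is itself a nontrivial multiparameter evaluation not covered by the elliptic Selberg evaluation when $k>1$.

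For contrast, the paper's proof has a genuinely different architecture. One first treats $|n-m|\le 1$ by a ``hexagon'' of double-integral identities: the Dixon \emph{evaluation} of Theorem \ref{thmselberg} is read backwards to introduce an auxiliary $y$-integration, the Dixon \emph{transformation} is applied in the $z$-variables (this is where the dimension changes, from $n$ to $k-1$ and later to $m$), the theorem itself with $m$ replaced by $m-1$ is applied to the $y$-integral as the induction hypothesis, and a final Dixon evaluation collapses the $y$-integral; the base case is the univariate elliptic beta integral. The extension to all $(n,m)$ is then a separate and delicate step: a degeneration $v_0\to t^{-1}t_3$ with $v_0v_1$ fixed, handled by desymmetrizing the integrand via Lemma \ref{lemsym} and picking up a residue, which yields the implication $T(n,m)\Rightarrow T(n,m-1)$. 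You correctly sensed that Theorem \ref{thmselberg} must supply the dimension change, but the decisive ideas --- the auxiliary double integral, the self-referential induction on $m$, and the residue/limit argument for $|n-m|>1$ --- are absent from your plan, and the step you do specify in detail is invalid.
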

Remark that in \cite{Rtrafo} it was shown that the integrals actually define single-valued meromorphic functions for parameters $t, t_r,v_r\in \mathbb{C}$ and $p,q \in \{ z\in \mathbb{C} ~|~ |z|<1\}$, subject to the balancing conditions. 

The proof will consist of creating the circle of identities depicted in Figure \ref{fig1}.
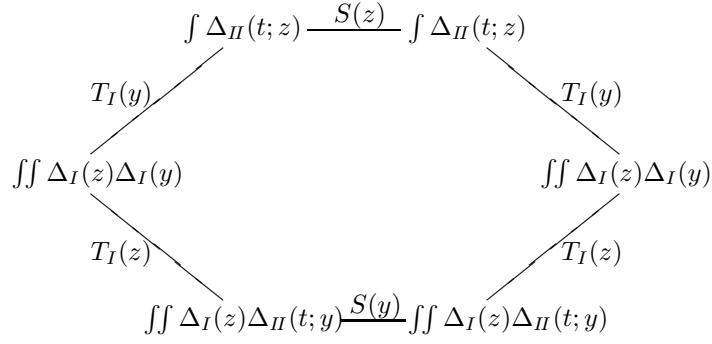
\begin{figure}
\begin{center}
\begin{picture}(260,150)
\put(50,20){$\iint \Delta_{I}(z) \Delta_{\II}(t;y)$}
\put(0,75){$\iint \Delta_{I}(z) \Delta_I(y)$}
\put(65,130){$\int \Delta_{\II}(t;z) $}
\put(150,130){$\int \Delta_{\II}(t;z) $}
\put(200,75){$\iint \Delta_{I}(z) \Delta_{I}(y)$}
\put(150,20){$\iint \Delta_{I}(z) \Delta_{\II}(t;y)$}
\drawline(80,30)(30,70)
\drawline(180,30)(230,70)
\drawline(80,125)(30,85)
\drawline(180,125)(230,85)
\drawline(125,22)(149,22)
\drawline(112,132)(147,132)
\put(208,105){$T_I (y)$}
\put(208,45){$T_I(z)$}
\put(30,105){$T_I(y)$}
\put(30,45){$T_I(z)$}
\put(122,135){$S(z)$}
\put(128,25){$S(y)$}
\end{picture}
\caption{The hexagon of double integral identities}\label{fig1}

\end{center}
\end{figure}
Here an edge labeled $T_I(x)$ indicates two double integrals equated to each other by an application of the Dixon transformation (Theorem \ref{thmselberg}) in the $x$ variable and 
$S(x)$ denotes an application of the conjectured transformation of Theorem \ref{thmmain} in the $x$ variable.
In order to turn this into a proof, we ensure that we use a special case of the theorem with one fewer variables in the bottom identity, than in the top identity. By induction we can then prove the general case.
\begin{proof}
We first prove the identity only for $|n-m|\leq 1$. In the coming calculation we hence assume $n\leq m\leq n+1$. 
By a limiting argument we can subsequently prove the general case. In the following calculations we assume all integration contours are taken as unit circles, which allows us to interchange the $y$ and $z$ integration at will. At the end we will show there exists an open set of parameters such that this is allowed; the general equation then follows by analytic continuation. 

Take parameters as in the statement of the theorem, but write $u:=t_3$. Moreover we write
$b= t^m \sqrt{pqt}/ upq$. 
Let 
\begin{align*}
I:= P_{BC_n} & \Gamma(t)^n \int_{C^n} \Delta_{\II}^{(n)}(t;z) \prod_{i=1}^n \prod_{r=0}^2 \Gamma(t_r z_i^{\pm 1}) \Gamma(u z_i^{\pm 1})
\prod_{r=0}^{2k-1} \Gamma(v_r z_i^{\pm 1})\frac{dz_i}{2\pi i z_i} .
\end{align*}
By viewing part of the integrand as the result of an elliptic Dixon evaluation (i.e. the case of Theorem \ref{thmselberg} with $m=0$) we get 
\begin{align*}
I& =
\frac{P_{BC_n}^2 }{\Gamma(t_0t_1/t,t_0t_2/t,t_1t_2/t) \prod_{r=0}^2 \Gamma(t_r \sqrt{pq/t} /b)}
\int_{C^n} \int_{C^n} 
\Delta_I^{(n)}(z) \Delta_I^{(n)}(y) 
\prod_{1\leq i,j\leq n} \Gamma(\sqrt{t} y_i^{\pm 1} z_j^{\pm 1})
\\ & \qquad \times 
\prod_{i=1}^n \Gamma(u z_i^{\pm 1},b \sqrt{pq/t} z_i^{\pm 1})\prod_{r=0}^{2k-1} \Gamma(v_r z_i^{\pm 1})
\prod_{i=1}^n \prod_{r=0}^{2} \Gamma(t_r/\sqrt{t} y_i^{\pm 1}) 
\Gamma(\sqrt{pq}/b y_i^{\pm 1})
\frac{dz_i}{2\pi i z_i} \frac{dy_i}{2\pi i y_i} 
\end{align*}
The balancing condition for the $y$ integral here is 
\[
\frac{t_0t_1t_2 }{b} \sqrt{\frac{pq}{t^3}} t^n = pq, 
\]
which indeed follows from our conditions. Also the number of parameters of the resulting $y$ integral is $2n+4=2\cdot n + 2\cdot 0 + 4$ as it should be. The $z$ integral is now also of Dixon type, so we can transform the $z$ integral (making sure that the balancing condition is satisfied and the number of parameters is correct) to get 
\begin{align*}
I &=
\frac{P_n P_{k-1} \Gamma(t)^n \prod_{0\leq r<s\leq 2k-1} \Gamma(v_rv_s)\prod_{r=0}^{2k-1} \Gamma(v_r u,v_rb \sqrt{pq/t}) \Gamma(ub \sqrt{pq/t})}{\Gamma(t_0t_1/t,t_0t_2/t,t_1t_2/t) \prod_{r=0}^2 \Gamma(t_r \sqrt{pq/t} /b)}
\\ & \qquad \times \int_{C^n (y)} \int_{C^{k-1} (z)} 
\Delta_I^{(k-1)}(z) \Delta_{\II}^{(n)}(t;y) 
\prod_{i=1}^{n} \prod_{j=1}^{k-1} \Gamma(\sqrt{pq/t} y_i^{\pm 1} z_j^{\pm 1})
\\ & \qquad \qquad \times 
\prod_{i=1}^{k-1} \Gamma(\sqrt{pq}/u z_i^{\pm 1}, \sqrt{t}/b z_i^{\pm 1})\prod_{r=0}^{2k-1} \Gamma(\sqrt{pq} v_r^{-1} z_i^{\pm 1})
\\ & \qquad\qquad  \times 
\prod_{i=1}^n \Gamma(\sqrt{t} u y_i^{\pm 1}) \prod_{r=0}^{2} \Gamma(t_r/\sqrt{t} y_i^{\pm 1}) \prod_{r=0}^{2k-1} \Gamma(\sqrt{t} v_r y_i^{\pm 1})
\frac{dz_i}{2\pi i z_i} \frac{dy_i}{2\pi i y_i} 
\end{align*}
Now we can use the transformation of the theorem in the $y$ integral. The parameters
$\sqrt{t}v_{i}$  satisfy the equation $\sqrt{t} v_{2i} \sqrt{t}v_{2i+1} =  pq$, so by the reflection equation of the elliptic gamma functions these parameters vanish. However now we have $\sqrt{pq/t} z_i \sqrt{pq/t} z_i^{-1} = pq/t$, so these form the new $v_i$ parameters. Thus the number of 
pairs of new ``$v_i$'' parameters equals the number of $z$ parameters equals $k-1$. Moreover 
\[
\sqrt{t} u \frac{t_0}{\sqrt{t}}\frac{t_1}{\sqrt{t}}\frac{t_2}{\sqrt{t}} =
\frac{ut_0t_1t_2}{t} = t^{2+(m-1)-n}.
\]
Thus the $y$-integral is of the form of the theorem with $m$ one lower. 
Applying the theorem in this case we get 
\begin{align*}
I &=
P_{m-1} P_{k-1} \Gamma(t)^{m-1} \prod_{i=m}^n \left( \Gamma(t_0t_1 t^{n-i-1},t_0t_2 t^{n-i-1},t_1t_2 t^{n-i-1})
\prod_{j=0}^2 \Gamma(t_j u t^{n-i})\right) \prod_{0\leq r<s\leq 2k-1} \Gamma(v_rv_s) \\ & \qquad \times \frac{
\prod_{r=0}^{2k-1} \Gamma(v_r u,v_rb \sqrt{pq/t}) \Gamma(ub \sqrt{pq/t})}{\Gamma(t_0t_1/t,t_0t_2/t,t_1t_2/t) \prod_{r=0}^2 \Gamma(t_r \sqrt{pq/t} /b)}
\\ & \qquad \times \int_{C^{m-1} (y)} \int_{C^{k-1} (z)} 
\Delta_I^{(k-1)}(z) \Delta_{\II}^{(m-1)}(t;y)
 \prod_{i=1}^{m-1} \prod_{j=1}^{k-1} \Gamma(\sqrt{pq/t} y_i^{\pm 1} z_j^{\pm 1})
\\ & \qquad \qquad \times 
\prod_{i=1}^{k-1} \Gamma(\sqrt{t}/b z_i^{\pm 1}) \prod_{r=0}^2 \Gamma(t_r \sqrt{pq}t^{-1} z_i^{\pm 1})\prod_{r=0}^{2k-1} \Gamma(\sqrt{pq} v_r^{-1} z_i^{\pm 1})
\\ & \qquad\qquad \qquad   \times \prod_{i=1}^{m-1} \Gamma(\sqrt{t}/u y_i^{\pm 1}) \prod_{r=0}^{2} \Gamma(t\sqrt{t}t_r^{-1} y_i^{\pm 1}) 
\frac{dz_i}{2\pi i z_i} \frac{dy_i}{2\pi i y_i} 
\end{align*}
Next we can again use the Dixon transformation on the $z$ integral.
We obtain the result
\begin{align*}
I &=
P_{m-1} P_{m} \prod_{i=m}^{n-1} \Gamma(t_0t_1 t^{n-i-1},t_0t_2 t^{n-i-1},t_1t_2 t^{n-i-1})
\prod_{i=m+1}^{n} \prod_{j=0}^2 \Gamma(t_j u t^{n-i})  \\ & \qquad \times
\prod_{i=0}^{2k-1} \Gamma(t_0 v_i ,t_1v_i,t_2v_i,u v_i)
 \Gamma(ub \sqrt{pq/t})
\\ & \qquad \times \int_{C^{m-1} (y)} \int_{C^{m} (z)} 
\Delta_I^{(m)}(z) \Delta_I^{(m-1)}(y) 
\prod_{i=1}^{m-1} \prod_{j=1}^{m}  \Gamma(\sqrt{t} y_i^{\pm 1} z_j^{\pm 1})
\\ & \qquad \qquad \times 
\prod_{i=1}^{m} \Gamma(\sqrt{pq/t} b z_i^{\pm 1})\prod_{r=0}^3 \Gamma(tt_r^{-1} z_i^{\pm 1})\prod_{r=0}^{2k-1} \Gamma(v_r z_i^{\pm 1})
 \prod_{i=1}^{m-1} \Gamma(\sqrt{t}/u y_i^{\pm 1}, \sqrt{pq}/b y_i^{\pm 1})
\frac{dz_i}{2\pi i z_i} \frac{dy_i}{2\pi i y_i} 
\end{align*}
Finally we can use an elliptic Dixon evaluation to evaluate the $y$ integral. We therefore obtain
\begin{align*}
I &=
\prod_{i=m+1}^{n} \prod_{0\leq r<s\leq 2} \Gamma(t_rt_s t^{n-i})  \prod_{r=0}^2 \Gamma(t_r u t^{n-i})
\prod_{i=0}^{2k-1} \prod_{r=0}^2  \Gamma(t_r v_i) \Gamma(u v_i)
 P_{m} \Gamma(t)^{m} 
\\ & \qquad \times \int_{C^{m}} \Delta_{\II}^{(m)}(t;z) 
\prod_{i=1}^{m} \Gamma(t/u z_i^{\pm 1})\prod_{r=0}^2 \Gamma(tt_r^{-1} z_i^{\pm 1})\prod_{r=0}^{2k-1} \Gamma(v_r z_i^{\pm 1})
\frac{dz_i}{2\pi i z_i}.
\end{align*}
Comparing with the original equation for $I$, we see that we have proven the theorem for 
the pair $(n,m)$, if we assume the theorem holds for $(n,m-1)$. 

In order for us not to have to worry about the constant interchanging of the $z$ and $y$ integrals 
we chose parameters such that the contour becomes a product of unit circles. 
For this we need the conditions that all parameters (in each integral) are less than 1 in size. Thus we get the bounds
\[
|pq|<|t|<1, \qquad
|pq|<|v_r^2| < 1,\qquad 
|t|<|u^2|<1, \qquad
|t|^2<|t_r|^2<|t|, \qquad 
|t|  < |b|^2 < |t/pq|.
\]
We will now show that these bounds can be satisfied, at least when $n\leq m\leq n+1$.
Let us pick $p$ and $q$ as we wish, norm less than 1. Moreover choose 
$|t|> |pq|^{1/2(n+m)}$. Then we can choose $v_r$'s without problem in the range
$|\sqrt{pq}|<v_r< |\sqrt{pq} /t|$, which ensures their partner is also in this range.
(Note that $|\sqrt{pq} /t|<1$, due to our bound on $|t|$.) If $m=n$ or $m=n+1$ we can 
choose 4 numbers $t_r$ ($r=0,1,2$) and $w$ with $|t|<|t_r|, |w| <|\sqrt{t}|$ such that $t_0t_1t_2w=t^{5/2+m-n}$ 
(because the norm of their product can be anything from $|t|^4$ to $|t|^2$ and the right hand side is 
either $|t|^{5/2}$ or $|t|^{7/2}$). Now $t_0$, $t_1$, and $t_2$ are as chosen and take $u=\sqrt{t}w$ to
satisfy their balancing condition. Finally we have to take $b=t^{m+1/2}/u\sqrt{pq}$ and see that
$|\sqrt{t}| < |t^{m+1/2}/\sqrt{pq}|< |b|< |t^m /\sqrt{pq}| \leq |\sqrt{t/pq}|$ (if $m\geq 1$, in the lower bound
we use that $|t|> |pq|^{1/2(n+m)}$). 
In short, the conditions define a non-empty open subset of the space of parameters if $n\leq m\leq n+1$. 

We can now prove the theorem using induction for $|m-n|\leq 1$. Indeed we start with the case $n=1$, $m=0$ and have the two induction steps $T(n-1,n) \to T(n,n)$ and $T(n,n) \to T(n+1,n)= T(n,n+1)$ (where $T(m,n)$ denotes the statement that the theorem holds for $m$ and $n$). In the base case  $n=1$, $m=0$ we find that our equation corresponds to the univariate case of the evaluation of Theorem \ref{thmdixon}. Note that
the case $n=m=1$ is an iteration of the univariate transformation of Theorem \ref{thmdixon}.

To prove the theorem for arbitrary $n$ and $m$ we can use the idea (already mentioned in \cite{REL}) to take limits from the case $n=m$, which should give an implication $T(n,m) \to T(n,m-1)$. By induction we would then have shown that the theorem holds for all pairs $(n,m)$. 

Taking the limit $v_0 \to t^{-1} t_3$ (while keeping $v_0v_1$ fixed) of the left hand side of the equation, we see that it reduces (using the reflection equation \eqref{eqrefl} for the elliptic gamma function) to 
\[
P_n  \Gamma(t)^n \int_{C^n} 
\Delta_{\II}^{(n)}(t;z) 
\prod_{i=1}^n \prod_{r=0}^2 \Gamma(t_r z_i^{\pm 1}) \Gamma(t^{-1} t_3 z_i^{\pm 1})\prod_{r=2}^{2k-1} \Gamma(v_r z_i^{\pm 1}) \frac{dz_i}{2\pi i z_i} ,
\]
that is the same kind of integral with $t_3$ replaced by $t^{-1} t_3$ and $k$ reduced by 1. On the right hand side the factor
$\Gamma(t_3v_1)$ converges to $\Gamma(pq)=0$, whereas the integral becomes singular. 
In order to calculate the limit we want to do some residue calculus, quite similar to that leading to 
\cite[Proposition 7.1]{vDS}. 
We prefer to first break symmetry in the integral, which we can do using the following lemma \cite[Lemma 6.2]{Rtrafo}
\begin{lemma}\label{lemsym}
Suppose we have the balancing condition $t^{n-1}u_0u_1u_2u_3=p$ then
\begin{multline*}
\sum_{\sigma \in \{\pm 1\}^n}
\prod_{1\leq i<j\leq n} \frac{\theta(tz_i^{\sigma_i} z_j^{\sigma_j};p)}{\theta(z_i^{\sigma_i}z_j^{\sigma_j};p)}
\prod_{i=1}^n \frac{\theta(u_0z_i^{\sigma_i},u_1z_i^{\sigma_i},u_2z_i^{\sigma_i},u_3z_i^{\sigma_i};p)}{\theta(z_i^{2\sigma_i};p)}
\\ =
\prod_{i=0}^{n-1} \theta(t^i u_0u_1,t^i u_0u_2,t^i u_0u_3;p) = 
\prod_{i=0}^{n-1} \theta(t^i u_0u_1,t^i u_0u_2,t^i u_1u_2;p).
\end{multline*}
\end{lemma}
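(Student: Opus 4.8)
The plan is to show that the left-hand side, call it $F(z_1,\dots,z_n)$, is in each variable $z_i$ separately a holomorphic elliptic function with multiplicative period $p$ — hence a constant — and then to identify that constant by induction on $n$. Two symmetries are immediate from relabelling the summation index $\sigma$: $F$ is symmetric in $z_1,\dots,z_n$, and it is invariant under each inversion $z_i\mapsto z_i^{-1}$, because flipping $\sigma_i$ in a single summand is precisely the substitution $z_i\mapsto z_i^{-1}$ in that summand.

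For the ellipticity I would fix one variable, say $z_n$, and examine the summand with $\sigma_n=+1$. Using $\theta(pz;p)=-z^{-1}\theta(z;p)$ (hence $\theta(p^2z;p)=p^{-1}z^{-2}\theta(z;p)$), the multiplier of this summand under $z_n\mapsto pz_n$ is $t^{-(n-1)}$ from the cross factors, $(u_0u_1u_2u_3)^{-1}z_n^{-4}$ from the four factors $\theta(u_rz_n;p)$, and $pz_n^{4}$ from $\theta(z_n^2;p)^{-1}$; the product $t^{-(n-1)}(u_0u_1u_2u_3)^{-1}p$ is exactly $1$ by the balancing condition $t^{n-1}u_0u_1u_2u_3=p$. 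So every $\sigma_n=+1$ summand is already $p$-periodic in $z_n$; the $\sigma_n=-1$ summand is the $\sigma_n=+1$ one with $z_n\mapsto z_n^{-1}$ and hence also $p$-periodic, so $F$ is $p$-periodic in $z_n$, and by symmetry in every variable.

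Next I would check that $F$ is holomorphic in each $z_i$. Modulo the period, the apparent poles of $F$ in $z_n$ lie at $z_n\in\{\pm1,\pm\sqrt p\}$ (from the factors $\theta(z_n^{\pm2};p)$) and at $z_n\in\{z_j,z_j^{-1}\}$ for $j<n$ (from the factors $\theta(z_j^{\sigma_j}z_n^{\sigma_n};p)$), all of them simple. At $z_n=\pm1,\pm\sqrt p$ the singular summands come, for each fixed choice of the other signs, in pairs $\sigma_n=\pm1$; since one member is obtained from the other by $z_n\mapsto z_n^{-1}$ (and $\theta(\cdot;p)$-residues merely rescale by $p$ under translation by $p$), the two residues are opposite and cancel. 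At $z_n=z_j$ the singular summands are those with $\sigma_n=-\sigma_j$; I would pair each with the summand obtained by flipping both $\sigma_j$ and $\sigma_n$ — equivalently, by the substitution $(z_j,z_n)\mapsto(z_j^{-1},z_n^{-1})$ — and use that, on the diagonal $z_j=z_n$, the part of a summand other than the vanishing denominator factor is invariant under that substitution, so once again the residues cancel. Together with the inversion and permutation symmetries this disposes of all apparent poles, so $F$ is holomorphic in each $z_i$ and therefore equal to a constant $C=C(t,u_0,u_1,u_2,u_3;p)$.

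Finally I would compute $C$ by induction on $n$, evaluating $F$ at $z_n=u_0^{-1}$ (permissible, for generic values of the remaining parameters, since $F$ is holomorphic and independent of $z$; the general case then follows by analytic continuation). There every $\sigma_n=+1$ summand vanishes because $\theta(u_0z_n;p)=\theta(1;p)=0$; in a $\sigma_n=-1$ summand the $n$-th single-variable factor collapses to $\theta(u_0u_1,u_0u_2,u_0u_3;p)$, while the cross factors become $\prod_{j<n}\theta(tu_0z_j^{\sigma_j};p)/\theta(u_0z_j^{\sigma_j};p)$, and absorbing these into the $j$-th single-variable factors cancels the $\theta(u_0z_j^{\sigma_j};p)$ there — leaving exactly the left-hand side of the lemma for $n-1$ with $(u_0,u_1,u_2,u_3)$ replaced by $(tu_0,u_1,u_2,u_3)$, whose balancing condition $t^{n-2}(tu_0)u_1u_2u_3=p$ is the one required. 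By the inductive hypothesis this is $\theta(u_0u_1,u_0u_2,u_0u_3;p)\prod_{i=1}^{n-1}\theta(t^iu_0u_1,t^iu_0u_2,t^iu_0u_3;p)=\prod_{i=0}^{n-1}\theta(t^iu_0u_1,t^iu_0u_2,t^iu_0u_3;p)$, with the base case $n=1$ being the same computation with no cross factors present; the second displayed form of the answer follows from the first via $\theta(x;p)=\theta(p/x;p)$ and the balancing condition. The step I expect to be the main obstacle is the residue bookkeeping in the holomorphy argument — correctly pairing the singular summands and verifying that the residues cancel — even though all of it is ultimately forced by the two elementary symmetries of $F$.
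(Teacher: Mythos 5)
The paper does not actually prove this lemma --- it imports it verbatim from Rains, citing it as Lemma 6.2 of \cite{Rtrafo} --- so there is no in-paper argument to compare against; your proof must therefore be judged on its own, and it is correct and is essentially the standard argument for identities of this type. The multiplier computation showing each $\sigma_n=+1$ summand is already $p$-periodic in $z_n$ is exactly where the balancing condition enters and checks out ($t^{-(n-1)}(u_0u_1u_2u_3)^{-1}p=1$); the residue cancellations work as you describe, since at $z_n=z_j$ the non-singular part of a summand depends only on the unordered pair $\{z_j^{\sigma_j},z_n^{\sigma_n}\}=\{\zeta,\zeta^{-1}\}$ while the two offending factors $1/\theta(x^{\pm1};p)$ have opposite leading singularities by $\theta(1/x;p)=-x^{-1}\theta(x;p)$; and the evaluation at $z_n=u_0^{-1}$ correctly absorbs the cross factors into the remaining single-variable factors, reducing to the $(n-1)$-variable case with $u_0\mapsto tu_0$, whose balancing condition $t^{n-2}(tu_0)u_1u_2u_3=p$ is again satisfied. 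The final equality of the two product forms does follow from $\theta(x;p)=\theta(p/x;p)$ combined with the balancing condition, since $p/(t^iu_0u_3)=t^{n-1-i}u_1u_2$.
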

We can now multiply the integrand on the right hand side with this constant factor (with $n$ replaced by $m$). As the original integrand
is $z_i\to 1/z_i$ symmetric, we see that (after we interchange the finite sum with the integral) each term in the sum is identical. So we can just take one term and multiply that by $2^m$. We can subsequently replace $u_0$ by $v_0$ (which cancels the poles of the integrand at 
$z_i=p^{-k}q^{-l}/v_0$, $k,l\geq 0$), and simplify the integrand to get (with the extra balancing condition $t^{m-1}v_0u_1u_2u_3=q$)
%
\begin{align*} 
RHS &=
\frac{1}{\prod_{i=0}^{m-1} \theta(t^i v_0u_1,t^i v_0u_2,t^i v_0u_3;q)}
\prod_{i=m+1}^n \prod_{0\leq r<s\leq 3} \Gamma(t_rt_s t^{n-i}) 
\prod_{i=0}^{2k-1} \prod_{r=0}^3 \Gamma(t_r v_i)
\\ & \qquad \times 2^m P_{m} \Gamma(t)^{m} \int_{C^{m}} 
\frac{\prod_{1\leq i<j\leq m} \Gamma(pt z_iz_j, tz_i/z_j,tz_j/z_i,t/z_iz_j) }{\prod_{1\leq i<j\leq m} \Gamma(pz_iz_j, z_i/z_j,z_j/z_i,1/z_iz_j) \prod_{i=1}^m \Gamma(pz_i^{2},1/z_i^2)}  
\\ & \qquad \times \prod_{i=1}^{m} \prod_{r=0}^3 \Gamma(t t_r^{-1} z_i^{\pm 1}) \prod_{r=1}^{2k-1} \Gamma(v_r z_i^{\pm 1})
\Gamma(pv_0 z_i, v_0/z_i) \theta(u_1z_i,u_2z_i,u_3z_i;q)
 \frac{dz_i}{2\pi i z_i},
\end{align*}
Now let us assume the parameters satisfy $|pq|<|t|<|t_r|<1$, $|v_r|<1$ (for $r>0$) and $1<|v_0|<1/|t|, 1/|p|,1/|q|$. In this case we can take the contour to be identical for all $z_i$ and equal to some deformation of the unit circle, which remains inside the annulus $\{z~|~ 1\leq |z|<1/|t|\}$ and which includes the pole at $v_0$, while excluding the poles at $t_r/t$ and $1/v_r$ ($r>0$). Note that we can't just take the limit $v_0 \to t_3/t$, because in that case no contour satisfying the stated conditions exists. However we can change the contours of the  integrals to the unit circle by picking up the residue at $v_0$. Once we picked up the residue in one integral, for the remaining integrals $v_0$ is no longer a pole, so we only need to pick up the residue once. All of the $m$ choices of picking up a residue give the same result, so we just get a factor $m$ in front of the integral. 
The result is that 
\begin{align*} 
RHS &=
\frac{1}{\prod_{i=0}^{m-1} \theta(t^i v_0u_1,t^i v_0u_2,t^i v_0u_3;q)}
\prod_{i=m+1}^n \prod_{0\leq r<s\leq 3} \Gamma(t_rt_s t^{n-i}) 
\prod_{i=0}^{2k-1} \prod_{r=0}^3  \Gamma(t_r  v_i)
\\ & \qquad \times 2^m P_{m} \Gamma(t)^{m} \int_{C^{m}} 
\frac{\prod_{1\leq i<j\leq m} \Gamma(pt z_iz_j, tz_i/z_j,tz_j/z_i,t/z_iz_j) }{\prod_{1\leq i<j\leq m} \Gamma(pz_iz_j, z_i/z_j,z_j/z_i,1/z_iz_j) \prod_{i=1}^m \Gamma(pz_i^{2},1/z_i^2)}  
\\ & \qquad \times \prod_{i=1}^{m} \prod_{r=0}^3 \Gamma(t t_r^{-1} z_i^{\pm 1}) \prod_{r=1}^{2k-1} \Gamma(v_r z_i^{\pm 1})
\Gamma(pv_0 z_i, v_0/z_i) \theta(u_1z_i,u_2z_i,u_3z_i;q)
 \frac{dz_i}{2\pi i z_i} \\
 &+ 
\frac{1}{\prod_{i=0}^{m-1} \theta(t^i v_0u_1,t^i v_0u_2,t^i v_0u_3;q)}
\prod_{i=m+1}^n \prod_{0\leq r<s\leq 3} \Gamma(t_rt_s t^{n-i}) 
\prod_{i=0}^{2k-1} \prod_{r=0}^3  \Gamma(t_r v_i)
\\ & \qquad \times m 2^m P_{m} \Gamma(t)^{m} \int_{C^{m-1}} 
\frac{\prod_{1\leq i<j\leq m-1} \Gamma(pt z_iz_j, tz_i/z_j,tz_j/z_i,t/z_iz_j) }{\prod_{1\leq i<j\leq m-1} \Gamma(pz_iz_j, z_i/z_j,z_j/z_i,1/z_iz_j) \prod_{i=1}^{m-1} \Gamma(pz_i^{2},1/z_i^2)}  
\\ & \qquad \times \prod_{i=1}^{m-1} \prod_{r=0}^3 \Gamma(t t_r^{-1} z_i^{\pm 1}) \prod_{r=1}^{2k-1} \Gamma(v_r z_i^{\pm 1})
\Gamma(pv_0 z_i, v_0/z_i) \theta(u_1z_i,u_2z_i,u_3z_i;q)
\\ & \qquad \times 
\frac{\prod_{1\leq i\leq m-1} \Gamma(pt z_iv_0, tz_i/v_0,tv_0/z_i,t/z_iv_0) }{\prod_{1\leq i\leq m-1} \Gamma(pz_iv_0, z_i/v_0,v_0/z_i,1/z_iv_0)  \Gamma(pv_0^{2},1/v_0^2)}  
\\ & \qquad \times \prod_{r=0}^3 \Gamma(t t_r^{-1} v_0^{\pm 1}) \prod_{r=1}^{2k-1} \Gamma(v_r v_0^{\pm 1})
\Gamma(pv_0^2 , v_0/z_i) \theta(u_1v_0,u_2v_0,u_3v_0;q) (p;p)(q;q)
 \frac{dz_i}{2\pi i z_i} 
\end{align*}
where all the contours are now unit circles (even in the first term, where the standard conditions on the contour would require the contour to envelop the pole at $v_0$). In the resulting expression we can take the limit $v_0 \to t_3/t$ (while writing $v_1 = pq/tv_0$) without problems (indeed, it just amounts to plugging this value in). The first term (i.e. the term where we did not pick up any residues) vanishes as the prefactor $\Gamma(v_1t_3)$ becomes $\Gamma(pq)=0$. Before we take the limit in the second term we have to use the 
reflection equation of the elliptic gamma function to write $\Gamma(t_3v_1) \Gamma(tv_0/t_3)=1$, so this prefactor does not arise there. 
We obtain (after cleaning up the resulting expression)
\begin{align*} 
RHS(v_0&=t_3/t) =
\frac{1}{\prod_{i=0}^{m-2} \theta(t^{i} t_3u_1,t^{i} t_3u_2,t^{i} t_3u_3;q)}
\prod_{i=m+1}^n \prod_{0\leq r<s\leq 3} \Gamma(t_rt_s t^{n-i}) 
\\& \qquad \times 
\prod_{i=2}^{2k-1} \prod_{r=0}^3  \Gamma(t_r v_i/t) 
\prod_{r=0}^2 \Gamma(t_rt_3/t,t^2/t_3 t_r) 
\\ & \qquad \times 2^{m-1} P_{m-1} \Gamma(t)^{m-1} \int_{C^{m-1}} 
\frac{\prod_{1\leq i<j\leq m-1} \Gamma(pt z_iz_j, tz_i/z_j,tz_j/z_i,t/z_iz_j) }{\prod_{1\leq i<j\leq m-1} \Gamma(pz_iz_j, z_i/z_j,z_j/z_i,1/z_iz_j) \prod_{i=1}^{m-1} \Gamma(pz_i^{2},1/z_i^2)}  
\\ & \qquad \times \prod_{i=1}^{m-1} \prod_{r=0}^2 \Gamma(t t_r^{-1} z_i^{\pm 1}) \Gamma(t^2/t_3 z_i^{\pm 1}) \prod_{r=2}^{2k-1} \Gamma(v_r z_i^{\pm 1})
 \theta(t_3z_i,u_1z_i,u_2z_i,u_3z_i;q)
 \frac{dz_i}{2\pi i z_i} 
\end{align*}
If we symmetrize this expression (using again Lemma \ref{lemsym}, and the inverse of the method described before to desymmetrize), we 
get the right hand side of the theorem with $m$ replaced by $m-1$, $t_3$ by $t_3/t$ and $v_0$ and $v_1$ removed. 
Thus, given that the theorem holds for $(n,m)$, we have now shown it also for $(n,m-1)$ (for an open set of parameters, so by analytic extension for all parameters). By induction the theorem holds for all values of $(n,m)$.
\end{proof}

\section{Some basic hypergeometric limits}
In this section we give three limits as $p\to 0$ of the new identity, obtaining basic hypergeometric integral identities. 

Let us first define several basic hypergeometric versions of the cross terms $\Delta_{\II}$ and the constants $P$:
\[
\tilde P_{BC_n} := \frac{(q;q)^n}{2^n n!}, \qquad \tilde P_{A_{n-1}} := \frac{(q;q)^n}{n!}
\]
and 
\begin{align*}
\tilde \Delta_{\II}^{(n)}(t;z) &= \prod_{1\leq i<j\leq n} \frac{(z_i^{\pm 1}z_j^{\pm 1};q)}{(tz_i^{\pm 1}z_j^{\pm 1};q)}
\prod_{i=1}^n (z_i^{\pm 2};q) \\
\tilde \Delta_{I\!I\!I}^{(n)}(t;z) &= \prod_{1\leq i<j\leq n}
\frac{(qz_iz_j/t, z_i/z_j,z_j/z_i;q)}{(tz_iz_j,tz_i/z_j,tz_j/z_i;q)}
\prod_{1\leq i \leq j\leq n} (1-z_iz_j)  \\
\tilde \Delta_{A\II}^{(n)}(t;z) &= \prod_{1\leq i<j\leq n}
\frac{(z_i/z_j,z_j/z_i;q)}{(tz_i/z_j,tz_j/z_i;q)}
\end{align*}

\begin{cor}\label{corbh1}
Under the balancing conditions 
\[
t_0t_1t_2t_3=t^{2+m-n},  \qquad k \leq m+n
\]
 we have 
\begin{align*}
\tilde P_{BC_n} & \frac{1}{(t;q)^n} \int_{C^n} 
\tilde \Delta_{\II}^{(n)}(t;z) 
\prod_{i=1}^n \prod_{r=0}^3 \frac{1}{(t_rz_i^{\pm 1};q)}  \prod_{r=0}^{k-1} \frac{(t v_r z_i^{\pm 1};q)}{(v_r z_i^{\pm 1};q)} \frac{dz_i}{2\pi i z_i}
\\ & =
\prod_{i=m+1}^n \prod_{0\leq r<s\leq 3} \frac{1}{(t_rt_s t^{n-i};q)} 
\prod_{i=0}^{k-1} \prod_{r=0}^3  \frac{(tv_r/t_r;q)}{(t_r v_r;q)}
\\ & \qquad \times \tilde P_{BC_m} \frac{1}{(t;q)^m}  \int_{C^{m}} 
\tilde \Delta_{\II}^{(m)}(t;z) 
\prod_{i=1}^{m} \prod_{r=0}^3 \frac{1}{(t/t_r z_i^{\pm 1};q)}  \prod_{r=0}^{k-1}  \frac{(t v_r z_i^{\pm 1};q)}{(v_r z_i^{\pm 1};q)}
 \frac{dz_i}{2\pi i z_i},
\end{align*}
where the integration contour is a product of unit circles if $|t|<|t_r|<1$, $|t|<1$, $|v_r|<1$ and we view this as an identity between
the meromorphic extensions of the functions otherwise.
\end{cor}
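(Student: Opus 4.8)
The plan is to obtain Corollary~\ref{corbh1} as the degeneration $p\to0$ of Theorem~\ref{thmmain}, treating first the case $k=m+n$ and then reaching smaller $k$ by a further limit. To set up the degeneration I would reparametrize the $2k$ variables $v_0,\dots,v_{2k-1}$ of Theorem~\ref{thmmain} (taken with $k=m+n$) by keeping the even-indexed ones free, say $v_{2i}=:w_i$, and using the balancing condition $v_{2i}v_{2i+1}=pq/t$ to solve $v_{2i+1}=pq/(tw_i)$. Then fix $t,t_0,t_1,t_2,t_3$ subject to $t_0t_1t_2t_3=t^{2+m-n}$ and $w_0,\dots,w_{k-1}$, all in the region $|t|<|t_r|<1$, $|t|<1$, $|w_r|<1$, and let $p\to0$. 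Since $v_{2i+1}=pq/(tw_i)\to0$, for all sufficiently small $p$ every parameter occurring in either integral of Theorem~\ref{thmmain} has modulus below $1$, so both contours may be taken to be products of unit circles and we are comparing honest integrals over fixed tori.

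Next I would compute the limit of each factor from the relations $\lim_{p\to0}\Gamma(z;p,q)=1/(z;q)$ and $\lim_{p\to0}\Gamma(pz;p,q)=(q/z;q)$ recorded in Section~\ref{secprelim}, together with $(p;p)\to1$. The constant $P_{BC_n}\Gamma(t)^n$ tends to $\tilde P_{BC_n}/(t;q)^n$, and likewise with $n$ replaced by $m$; the kernel $\Delta_{\II}^{(n)}(t;z)$ tends to $\tilde\Delta_{\II}^{(n)}(t;z)$ straight from the definitions; the factor $\prod_{r=0}^3\Gamma(t_rz_j^{\pm1})$ tends to $\prod_{r=0}^3 1/(t_rz_j^{\pm1};q)$; and, crucially, each pair of $v$-factors degenerates as $\Gamma(v_{2i}z_j^{\pm1})\,\Gamma(v_{2i+1}z_j^{\pm1})\to (tw_iz_j^{\pm1};q)/(w_iz_j^{\pm1};q)$, since $v_{2i+1}$ is $p$ times the fixed quantity $q/(tw_i)$ and $q\big/\big(q/(tw_i)\big)=tw_i$. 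On the right-hand side, $\prod_{i=m+1}^n\prod_{0\le r<s\le3}\Gamma(t_rt_st^{n-i})$ tends to $\prod_{i=m+1}^n\prod_{0\le r<s\le3}1/(t_rt_st^{n-i};q)$, and each pair in $\prod_{i=0}^{2k-1}\prod_{r=0}^3\Gamma(t_rv_i)$ contributes $\prod_{r=0}^3 (tw_i/t_r;q)/(t_rw_i;q)$. All these convergences are uniform for $z$ on the unit torus, and for parameters in the open region above no pole of the limiting integrand meets the contour, so the integrands remain uniformly bounded and dominated convergence lets me pass $\lim_{p\to0}$ through the $n$-fold and $m$-fold integrals. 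Renaming $w_i$ back to $v_i$ gives Corollary~\ref{corbh1} in the case $k=m+n$.

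For $k<m+n$ I would start from the case just proved, with the full set of $m+n$ parameters, and send $v_k,\dots,v_{m+n-1}\to0$: in each integrand the factor $(tv_rz_i^{\pm1};q)/(v_rz_i^{\pm1};q)$ tends to $1$, and on the right-hand side $(tv_r/t_r;q)/(t_rv_r;q)$ tends to $1$, which yields the identity for all $k\le m+n$. Finally, both sides define single-valued meromorphic functions of the parameters (the contour-deformation argument that makes the elliptic integrals meromorphic applies verbatim after $p\to0$), so the identity, having been established on a non-empty open set, extends to all parameters by analytic continuation.

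The one genuine obstacle is the interchange of $\lim_{p\to0}$ with the multiple integrals: one must keep track of the $p$-dependent parameter $v_{2i+1}=pq/(tw_i)$ and verify that, for parameters in the open region and $p$ near $0$, the poles of the integrand stay off the unit torus, so that the family is uniformly bounded there. Once that is in place, what remains is a mechanical substitution into the two limit formulas for $\Gamma$ and a matching of the resulting products against the definitions of $\tilde\Delta_{\II}$, $\tilde P_{BC_n}$ and the right-hand side of the corollary.
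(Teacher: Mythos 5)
Your proposal is correct and follows essentially the same route as the paper: substitute $v_{2i+1}=pq/tv_{2i}$, let $p\to0$ with the remaining parameters fixed in the region where both contours stay unit circles (so the uniform convergence on the compact torus justifies exchanging limit and integral), match factors via $\lim_{p\to0}\Gamma(z;p,q)=1/(z;q)$ and $\lim_{p\to0}\Gamma(pz;p,q)=(q/z;q)$, then send the extra $v_i\to0$ for $k<m+n$ and conclude by analytic continuation. The paper states this more tersely; your factor-by-factor verification is just the spelled-out version of the same argument.
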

\begin{proof}
We obtain the case $k =m+n$ by replacing $v_{2i+1}$ by $pq/t v_{2i}$ (as dictated by the conditions) in Theorem \ref{thmmain}, and taking the limit $p\to 0$ while keeping $v_{2i}$ and $t$ and $t_r$ constant. Note that we may replace the limit and the integral (on both sides of the equation) if the contours can be chosen to be unit circles for all small values of $p$ (and hence can remain fixed), as the convergence is uniform on a compact set. There exists an open set of conditions ($|t|<1$, $|t|<|t_r|<1$ and $|v_{2i}|<1$) for which this is possible. By analytic extension the result then holds as an identity between meromorphic functions for all values of the parameters.

Subsequently we can take the limit $v_i\to 0$ for some $i$ to obtain the cases with $k<m+n$.
\end{proof}
Another basic hypergeometric corollary can be obtained by taking a symmetry breaking limit. 
\begin{cor}\label{cor2}
Under the balancing conditions 
\[
\frac{t_0t_1}{s_0s_1}=t^{m-n},\qquad   k= m+n, \qquad  t^{n+1}t_0t_1u_2u_3=q,\qquad  t^{n+1}s_0s_1w_2w_3=q
\]
we have the following equation 
\begin{align*}
\frac{ \tilde P_{A_{n-1}}}{(t;q)^n}&
 \int_{C^n} 
\tilde\Delta_{\III}^{(n)}(t;z) 
\prod_{i=1}^n \frac{(qz_i s_0,qz_i s_1;q)}{(tt_0z_i^{\pm 1}, tt_1z_i^{\pm 1}, z_i/s_0,z_i/s_1;q)}
\prod_{r=0}^{k-1} \frac{(z_i v_r t ,qz_i/v_r,  ;q)}{(v_rz_i, qz_i/tv_r;q)}
\prod_{i=1}^n \theta(u_2z_i,u_3z_i;q)
\frac{dz_i}{2\pi i z_i}
\\ & =
\frac{\prod_{i=1}^{n} \theta(t^{i} t_0u_2,t^{i} t_0u_3;q)}{\prod_{i=1}^{m} \theta(t^{i} w_2 s_0,t^{i} w_3 s_0;q)}
\prod_{i=m+1}^n \frac{(qt^{i-n} s_0s_1;q)}{(t_0t_1t^{2+n-i};q)}
 \prod_{j=0}^1 \prod_{k=0}^1 \frac{1}{(t_jt^{1+n-i}/s_k;q)}
\\ & \qquad \times 
\prod_{r=0}^{k-1} \frac{(qs_0/v_r,tv_r s_0,qs_1/v_r, v_rts_1;q)}{(t_0tv_r, qt_0/v_r,t_1tv_r, qt_1/v_r;q)}
\\ & \qquad \times \frac{\tilde P_{A_{m-1}}}{(t;q)^m} \int_{C^{m}} 
\tilde \Delta_{\III}^{(m)}(t;z) 
\prod_{i=1}^{m} \frac{(qt_0z_i, qt_1z_i;q)}{(z_i/t_0,z_i/t_1,ts_0z_i^{\pm 1}, ts_1z_i^{\pm 1};q)}
\\ & \qquad \qquad \qquad \qquad \qquad  \times \prod_{r=0}^{k-1} \frac{(v_rtz_i,qz_i/v_r;q)}{(v_rz_i, qz_i/tv_r;q)}
\prod_{i=1}^m \theta(w_2z_i,w_3z_i;q)
\frac{dz_i}{2\pi i z_i},
\end{align*}
\end{cor}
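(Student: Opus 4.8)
\emph{Proof proposal.} The plan is to obtain Corollary \ref{cor2} as a $p\to 0$ degeneration of Theorem \ref{thmmain}, but one in which the $z_i\to z_i^{-1}$ symmetry of both the $BC_n$ and the $BC_m$ integral is broken \emph{before} the limit is taken. The symmetry breaking is exactly the device used at the end of the proof of Theorem \ref{thmmain}: desymmetrize a $\Delta_{\II}$-integral using Lemma \ref{lemsym} (so that one copy of the full $\{\pm1\}^n$ sum is isolated and multiplied by a power of $2$), replace one of the auxiliary parameters by one of the $v$- or $t$-parameters so that the corresponding tower of poles is cancelled, move the contour to the unit circle while collecting a single residue, and resum the resulting geometric family of residues. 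Carried out on the $m$-integral this is literally the computation producing the expression labelled ``$RHS(v_0=t_3/t)$'' in that proof, which is already an identity involving the partially symmetric kernel that is the elliptic analogue of $\tilde\Delta_{\III}^{(m)}$ together with factors $\theta(\,\cdot\,;q)$; the new ingredient is to run the same procedure a second time on the $n$-integral.

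Concretely I would proceed as follows. First specialize Theorem \ref{thmmain}, writing $v_{2i+1}=pq/(tv_{2i})$ as in the proof of Corollary \ref{corbh1}, so that after $p\to 0$ each pair $v_{2i},v_{2i+1}$ contributes the ratio $(tv_iz_i^{\pm1};q)/(v_iz_i^{\pm1};q)$. Next, on the $BC_n$ side break symmetry by coalescing the parameters $t_0,t_1$ (applying Lemma \ref{lemsym} with $n$ variables): this converts $\Gamma(t_0z_i^{\pm1})\Gamma(t_1z_i^{\pm1})$ into $z_i$-linear theta factors $\theta(u_2z_i,u_3z_i;q)$ together with surviving non-symmetric $\Gamma$'s, and replaces $\Delta_{\II}^{(n)}$ by the $\Delta_{\III}$-type kernel. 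On the $BC_m$ side do the mirror procedure with $s_0,s_1$ in place of $t_0,t_1$, producing $\theta(w_2z_i,w_3z_i;q)$. Because the two integrals are broken along different parameter groups, the data $(t_0,t_1)$ and $(s_0,s_1)$ appear interchanged on the two sides, which is precisely the Euler-transformation-like asymmetry in the statement. Then take $p\to 0$ with the remaining parameters fixed, pushing the limit through the integrals (legitimate on an open set of parameters where the unit-circle contour may be kept fixed for all small $p$, by uniform convergence on compacta, exactly as in the proof of Corollary \ref{corbh1}), and read off the prefactors from $\lim_{p\to0}\Gamma(z;p,q)=1/(z;q)$ and $\lim_{p\to0}\Gamma(pz;p,q)=(q/z;q)$. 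The elliptic conditions $t_0t_1t_2t_3=t^{2+m-n}$ and $v_{2i}v_{2i+1}=pq/t$, together with the two extra $q$-balancing conditions produced by the two applications of Lemma \ref{lemsym}, collapse to the four conditions $t_0t_1/(s_0s_1)=t^{m-n}$, $k=m+n$, $t^{n+1}t_0t_1u_2u_3=q$, $t^{n+1}s_0s_1w_2w_3=q$ listed in the corollary.

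The main obstacle, as in the analogous step of the proof of Theorem \ref{thmmain}, is the residue bookkeeping: one must verify that the full tower of poles crossed when the contour is deformed resums precisely into the $\tilde\Delta_{\III}$ cross-term and reproduces the explicit products of theta functions and $q$-shifted factorials in the prefactor, and one must do this consistently on both sides so that the surviving $v_r$-parameters, the contours, and the balancing conditions match. Getting the powers of $t$ and $q$ and the ranges of the products ($\prod_{i=m+1}^n$, $\prod_{i=1}^m$, $\prod_{i=1}^n$) exactly right on both sides simultaneously is where sign and exponent errors are most likely to intrude; the remaining ingredients (the existence of a nonempty open region of contours, and the final passage to an identity of meromorphic functions by analytic continuation) are routine and follow the template already established in the paper.
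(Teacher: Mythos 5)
Your overall strategy is the paper's: desymmetrize \emph{both} sides of Theorem \ref{thmmain} with Lemma \ref{lemsym} (with $u_0=t_0$, $u_1=t_1$ on the $BC_n$ side and the analogous choice built from the other parameter pair on the $BC_m$ side, which is what interchanges the roles of $(t_0,t_1)$ and $(s_0,s_1)$ in the statement), then send $p\to0$. But two of your steps do not match what actually has to happen, and both are substantive. First, no residue calculus occurs in this proof. The contour deformation, the single collected residue, and the ``resummation'' you describe belong to the induction step $T(n,m)\to T(n,m-1)$ inside the proof of Theorem \ref{thmmain}, where the limit $v_0\to t_3/t$ pinches the contour. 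Here no parameter is sent to a pinching value: Lemma \ref{lemsym} is applied purely algebraically (multiply the integrand by the constant, interchange the finite sum with the integral, use the $z_i\to z_i^{-1}$ symmetry to reduce to $2^n$ times one term), and the specialization $u_0=t_0$, $u_1=t_1$ just combines $\theta(t_0z_i;q)\Gamma(t_0z_i)=\Gamma(pt_0z_i)$ via the difference equation. The kernel $\tilde\Delta_{\III}$ is the $p\to0$ limit of a single term of the $\{\pm1\}^n$ sum applied to $\Delta_{\II}^{(n)}$; it does not arise from resumming towers of residues. So the ``main obstacle'' you flag is not an obstacle in this argument.

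Second, and more seriously, ``take $p\to 0$ with the remaining parameters fixed'' underdetermines the limit and, read literally, does not yield the stated identity. After desymmetrization the balancing conditions still involve $pq$, and which basic hypergeometric identity you land on depends on how each parameter is scaled against $p$. The paper replaces $(t_0,t_1,t_2,t_3)$ by $(p^{-1/2}t_0,p^{-1/2}t_1,p^{1/2}t_2,p^{1/2}t_3)$, $v_r$ by $p^{1/2}v_r$, $u_{2/3},w_{2/3}$ by $p^{1/2}u_{2/3},p^{1/2}w_{2/3}$, and shifts the integration variables $z_i\to p^{-1/2}z_i$ (moving the contours back), precisely so that afterwards every elliptic gamma function is of the form $\Gamma(y)$ or $\Gamma(py)$ with $y$ independent of $p$ and with the symmetric/asymmetric distribution appearing in the corollary; only then can one set $p=0$. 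A symptom that this is missing from your plan: you assert each $v$-pair contributes $(tv_rz_i^{\pm1};q)/(v_rz_i^{\pm1};q)$, which is the fully symmetric pattern of Corollary \ref{corbh1}, whereas Corollary \ref{cor2} has the asymmetric factors $(v_rtz_i,qz_i/v_r;q)/(v_rz_i,qz_i/tv_r;q)$ — these come exactly from the substitution $v_r\to p^{1/2}v_r$ turning $\Gamma(v_rz_i^{\pm1})$ into $\Gamma(v_rz_i,pv_r/z_i)$. Without committing to this scaling (and checking the accompanying contour shift crosses no poles), your limit produces a differently normalized identity at best, and the prefactors and balancing conditions you read off will not be those of the corollary.
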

And as a further limit we obtain
\begin{cor}\label{cor3}
Under the balancing conditions 
\[
\frac{t_0t_1}{s_0s_1}=t^{m-n},  \qquad k \leq m+n,\qquad  t^{n+1}t_0t_1u_2u_3=q,\qquad  t^{n+1}s_0s_1w_2w_3=q
\]
the following equation holds 
\begin{align*}
\frac{ \tilde P_{A_{n-1}}}{(t;q)^n}&
 \int_{C^n} 
\tilde\Delta_{A\II}^{(n)}(t;z) 
\prod_{i=1}^n \frac{1}{(tt_0/z_i, tt_1/z_i, z_i/s_0,z_i/s_1;q)}
\prod_{r=0}^{k-1} \frac{(z_i v_r t ;q)}{(v_rz_i;q)}
\prod_{i=1}^n \theta(u_2z_i,u_3z_i;q)
\frac{dz_i}{2\pi i z_i}
\\ & =
\frac{\prod_{i=1}^{n} \theta(t^{i} t_0u_2,t^{i} t_0u_3;q)}{\prod_{i=1}^{m} \theta(t^{i} w_2 s_0,t^{i} w_3 s_0;q)}
\prod_{i=m+1}^n  \prod_{j=0}^1 \prod_{k=0}^1 \frac{1}{(t_jt^{1+n-i}/s_k;q)}
\prod_{r=0}^{k-1}  \frac{(tv_r s_0, v_rts_1;q)}{(t_0tv_r,t_1tv_r;q)}
\\ & \qquad \times \frac{\tilde P_{A_{m-1}}}{(t;q)^m} \int_{C^{m}} 
\tilde \Delta_{A\II}^{(m)}(t;z) 
\prod_{i=1}^{m} \frac{1}{(z_i/t_0,z_i/t_1,ts_0/z_i, ts_1/z_i;q)}
\prod_{r=0}^{k-1} \frac{(v_rtz_i;q)}{(v_rz_i;q)}
\prod_{i=1}^m \theta(w_2z_i,w_3z_i;q)
\frac{dz_i}{2\pi i z_i},
\end{align*}
\end{cor}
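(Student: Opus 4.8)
The plan is to derive Corollary \ref{cor3} from Corollary \ref{cor2} by a degeneration, in the same style in which Corollary \ref{corbh1} was read off from Theorem \ref{thmmain}: I introduce an auxiliary scaling parameter $\lambda$, rescale the integration variables together with some of the parameters, and then send $\lambda$ to a boundary value. Concretely, in the $k=m+n$ case of Corollary \ref{cor2} I would substitute $z_i\mapsto\lambda z_i$ in both the $n$-fold and the $m$-fold integral, together with $t_0,t_1,s_0,s_1\mapsto\lambda t_0,\lambda t_1,\lambda s_0,\lambda s_1$ and $v_r,u_2,u_3,w_2,w_3\mapsto v_r/\lambda,u_2/\lambda,u_3/\lambda,w_2/\lambda,w_3/\lambda$. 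A direct check shows that each of the three balancing conditions $t_0t_1/s_0s_1=t^{m-n}$, $t^{n+1}t_0t_1u_2u_3=q$, $t^{n+1}s_0s_1w_2w_3=q$ is left invariant (in every monomial the powers of $\lambda$ cancel), and the measures $dz_i/2\pi i z_i$ and the constants $\tilde P_{A_{n-1}}$, $\tilde P_{A_{m-1}}$, $(t;q)^{-n}$, $(t;q)^{-m}$ are untouched.

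The core of the argument is then to let $\lambda\to0$ and verify that, termwise, the integrand and prefactor on each side converge to those of Corollary \ref{cor3}. The crucial degeneration is $\tilde\Delta_{\III}^{(n)}(t;\lambda z)\to\tilde\Delta_{A\II}^{(n)}(t;z)$: the arguments $z_i/z_j$ are untouched by the rescaling, while the factors $(q\lambda^2z_iz_j/t;q)$, $(t\lambda^2z_iz_j;q)$, $(1-\lambda^2z_iz_j)$ and $(1-\lambda^2z_i^2)$ all tend to $1$. Beyond this, the substitution was chosen precisely so that every $q$-shifted factorial in the integrand or the prefactor whose argument picks up a net positive power of $\lambda$ --- these are $(tt_0\lambda^2z_i;q)$, $(tt_1\lambda^2z_i;q)$, $(q\lambda^2z_is_0;q)$, $(q\lambda^2z_is_1;q)$, $(q\lambda^2z_i/v_r;q)$, $(q\lambda^2z_i/tv_r;q)$, their counterparts in the $m$-fold integral, and the prefactor terms $(qt^{i-n}\lambda^2 s_0s_1;q)$, $(t_0t_1t^{2+n-i}\lambda^2;q)$, $(q\lambda^2 s_k/v_r;q)$, $(q\lambda^2 t_j/v_r;q)$ --- tends to $1$, whereas every factor with a $\lambda$-free argument (the surviving $(tt_0/z_i;q)$, $(z_i/s_0;q)$, $(v_rtz_i;q)$, $\theta(u_2z_i;q)$, the $m$-fold analogues, and the prefactor pieces $\theta(t^it_0u_2;q)$, $(t_jt^{1+n-i}/s_k;q)$, $(tv_rs_0;q)$, $(t_0tv_r;q)$, and so on) is carried over unchanged. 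Matching the surviving pieces against Corollary \ref{cor3} gives the identity in the case $k=m+n$, and the cases $k<m+n$ follow, exactly as in the proof of Corollary \ref{corbh1}, by additionally letting $v_i\to0$ for the superfluous indices, which trivializes $(v_iz_i;q)$, $(v_itz_i;q)$ and the prefactor ratio $(tv_is_0,v_its_1;q)/(t_0tv_i,t_1tv_i;q)$.

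The one genuinely delicate point --- and the main obstacle --- is justifying the interchange of the limit $\lambda\to0$ with the integrations. As in Corollary \ref{corbh1}, I would restrict to an open set of parameters for which, for all sufficiently small $|\lambda|$, the product of contours can be taken to be one fixed product of circles separating the geometric progressions of poles contributed by the denominator factors from those contributed by the numerator factors; the $\lambda$-dependent poles either stay put or recede to infinity as $\lambda\to0$, so such a fixed contour works uniformly, the convergence on it is uniform, and the limit may be brought inside the integral sign. The identity then holds for that open set, and hence for all parameters subject to the balancing conditions by analytic continuation of the meromorphic extensions of both sides.
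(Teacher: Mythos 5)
Your proposal is correct and is essentially the paper's own argument: the paper derives Corollary \ref{cor3} from Corollary \ref{cor2} by exactly the substitution $t_r\to at_r$, $s_r\to as_r$, $v_r\to v_r/a$, $u_r\to u_r/a$, $w_r\to w_r/a$, $z_i\to az_i$ (your $\lambda$ is its $a$), sending $a\to0$ after shifting the contours back to unit circles, and then obtaining $k<m+n$ by letting superfluous $v_i\to0$ as in Corollary \ref{corbh1}. Your bookkeeping of which $q$-shifted factorials survive and which tend to $1$, and your justification of the limit--integral interchange, simply make explicit what the paper leaves to the reader (it even remarks one can just set $a=0$).
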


\begin{proof}[Proof of Corollaries \ref{cor2} and \ref{cor3}]
In order to break the symmetry we use the same argument as in the proof of Theorem \ref{thmmain}, when we wanted to take the limit which reduced $m$.
Now however we specialize the parameters in Lemma \ref{lemsym} as $u_0=t_0$ and $u_1=t_1$ (while we still choose $u_2$ and $u_3$ arbitrarily). 
Moreover we break symmetry on both sides of the equation (but on the right hand side we take the parameters $u$ from Lemma \ref{lemsym} to be $t/t_2$, $t/t_3$, $w_2$ and $w_3$), instead of just on one side.

After simplifying the expression using the difference equation of the elliptic gamma function we obtain that under the balancing conditions
\[
t_0t_1t_2t_3=t^{2+m-n},  \qquad v_{2i}v_{2i+1} = pq t^{-n_i}, \qquad k = m+n,\qquad   t^{n-1}t_0t_1u_2u_3=q,\qquad  t^{n+1}w_2w_3=qt_2t_3
\]
with $n_i \in \mathbb{Z}_{\geq 0}$ we have
\begin{align*}
P_{A_{n-1}} & \Gamma(t)^n 
 \int_{C^n} 
\prod_{1\leq i<j\leq n} \frac{\Gamma(ptz_iz_j,tz_i/z_j,tz_j/z_i,t/z_iz_j)}{\Gamma(pz_iz_j,z_i/z_j,z_j/z_i,1/z_iz_j)}\\ & \qquad \times 
\prod_{i=1}^n \frac{\Gamma(pt_0z_i,t_0/z_i,pt_1z_i,t_1/z_i,t_2z_i^{\pm 1}, t_3 z_i^{\pm 1})\prod_{r=0}^{2k-1} \Gamma(v_r z_i^{\pm 1})  \theta(u_2z_i,u_3z_i;q) }{\Gamma(pz_i^2, z_i^{-2})}
\frac{dz_i}{2\pi i z_i}
\\ & =
\frac{\prod_{i=0}^{n-1} \theta(t^i t_0t_1,t^i t_0u_2,t^i t_0u_3;q)}{\prod_{i=0}^{m-1} \theta(t^{i+2}/t_2t_3,t^{i+1} w_2/t_2,t^{i+1} w_3/t_2;q)}
\prod_{i=m+1}^n \prod_{0\leq r<s\leq 3} \Gamma(t_rt_s t^{n-i}) 
\prod_{i=0}^{2k-1} \prod_{r=0}^3  \Gamma(t_r v_i)
\\ & \qquad \times P_{A_{m-1}} \Gamma(t)^{m} \int_{C^{m}} 
\prod_{1\leq i<j\leq m} \frac{\Gamma(ptz_iz_j,tz_i/z_j,tz_j/z_i,t/z_iz_j)}{\Gamma(pz_iz_j,z_i/z_j,z_j/z_i,1/z_iz_j)} \\ & \qquad  \times
\prod_{i=1}^m \frac{ \Gamma(t/t_0 z_i^{\pm 1}, t/t_1 z_i^{\pm 1}, pt z_i/t_2, t/t_2z_i, ptz_i/t_3, t/t_3z_i)\prod_{r=0}^{2k-1} \Gamma(v_r z_i^{\pm 1})  \theta(w_2z_i,w_3z_i;q) }{\Gamma(pz_i^2, z_i^{-2})}
\frac{dz_i}{2\pi i z_i},
\end{align*}
where the integration contours are unit circles if $|t|<|t_r|<1$ and $|v_r|<1$ and for other parameters we view it as an equation between 
meromorphic functions.

Now we replace $(t_0,t_1,t_2,t_3)$ by $(p^{-1/2}t_0,p^{-1/2}t_1,p^{1/2}t_2,p^{1/2}t_3)$, and 
$v_r$ by $p^{1/2}v_r$. Moreover we set $u_{2/3} = p^{1/2} u_{2/3}$ and $w_{2/3}= p^{1/2}w_{2/3}$. Finally we shift the integration parameters $z_i$ to $p^{-1/2}z_i$ (and move the integration contours back).
Then we obtain under the balancing conditions 
\[
t_0t_1t_2t_3=t^{2+m-n},  \qquad v_{2i}v_{2i+1} = \frac{q}{t}, \qquad k = m+n,\qquad  t^{n-1}t_0t_1u_2u_3=q,\qquad  t^{n+1}w_2w_3=qt_2t_3
\]
with $n_i \in \mathbb{Z}_{\geq 0}$ the equation
\begin{align*}
P_{A_{n-1}} & \Gamma(t)^n 
 \int_{C^n} 
\prod_{1\leq i<j\leq n} \frac{\Gamma(tz_iz_j,tz_i/z_j,tz_j/z_i,pt/z_iz_j)}{\Gamma(z_iz_j,z_i/z_j,z_j/z_i,p/z_iz_j)}\\ & \qquad \times 
\prod_{i=1}^n \frac{\Gamma(t_0z_i^{\pm 1},t_1z_i^{\pm 1},t_2z_i, pt_2/z_i, t_3z_i,pt_3/z_i)\prod_{r=0}^{2k-1} \Gamma(v_rz_i,pv_r/z_i)  \theta(u_2z_i,u_3z_i;q) }{\Gamma(z_i^2, pz_i^{-2})}
\frac{dz_i}{2\pi i z_i}
\\ & =
\frac{\prod_{i=0}^{n-1} \theta(t^i t_0t_1/p,t^i t_0u_2,t^i t_0u_3;q)}{\prod_{i=0}^{m-1} \theta(t^{i+2}/pt_2t_3,t^{i+1} w_2/t_2,t^{i+1} w_3/t_2;q)}
\prod_{i=m+1}^n \Gamma(t_0t_1t^{n-i}/p, pt_2t_3t^{n-i}) \prod_{r=0}^1 \prod_{s=2}^3 \Gamma(t_rt_s t^{n-i}) 
\\ & \qquad \times\prod_{i=0}^{2k-1}  \Gamma(t_0  v_i,t_1  v_i,pt_2  v_i,pt_3  v_i)
P_{A_{m-1}} \Gamma(t)^{m} \int_{C^{m}} 
\prod_{1\leq i<j\leq m} \frac{\Gamma(tz_iz_j,tz_i/z_j,tz_j/z_i,pt/z_iz_j)}{\Gamma(z_iz_j,z_i/z_j,z_j/z_i,p/z_iz_j)} \\ & \qquad  \times
\prod_{i=1}^m \frac{ \Gamma(pt/t_0z_i, tz_i/t_0, pt/t_1z_i, tz_i/t_1, t z_i^{\pm 1}/t_2, tz_i^{\pm 1}/t_3)\prod_{r=0}^{2k-1} \Gamma(v_r z_i,pv_r/z_i)  \theta(w_2z_i,w_3z_i;q) }{\Gamma(z_i^2, pz_i^{-2})}
\frac{dz_i}{2\pi i z_i},
\end{align*}
where the integration contours are again unit circles if $|t|<|t_r|<1$ and $|v_r|<1$. For other parameters we still view this as an equation between meromorphic functions. Using the balancing condition $t_0t_1t_2t_3=t^{2+m-n}$ and the difference equation of the elliptic gamma function we see that 
\[
\frac{\prod_{i=0}^{n-1} \theta(t^i t_0t_1/p;q) \prod_{i=m+1}^n \Gamma(t_0t_1t^{n-i}/p)}{\prod_{i=0}^{m-1} \theta(t^{i+2}/pt_2t_3;q)} =
\prod_{i=m+1}^n \Gamma(t_0t_1t^{n-i})
\]
After this replacement we see that all $\theta$-function in the expression are $p$-independent and the elliptic gamma functions are either of the form $\Gamma(y)$ or $\Gamma(py)$, so we can simply plug in $p=0$ to obtain a basic hypergeometric limit. Simplifying this limit (and in particular replacing all $v_{2r+1}$ by $q/t^{n_r}v_{2r}$) gives us the equation from Corollary \ref{cor2}.

If in the equation of Corollary \ref{cor2} we replace $t_r\to at_r$, $s_r\to as_r$, $v_r \to v_r/a$, $u_r \to u_r/a$, $w_r \to w_r/a$ and $z_i\to az_i$ and shift the integration contour back to a product of unit circles, we can take the limit as $a\to 0$ (in fact, we can just set $a=0$) to obtain the integral identity from Corollary \ref{cor3} for $k=m+n$. Like in the proof of Corollary \ref{corbh1} we can obtain the cases with $k <m+n$ by subsequently setting an appropriate number of $v_i$'s equal to 0.
\end{proof}

\section{Classical limit}\label{secclass}
In this section we will take classical limit of Theorem \ref{thmmain}. That is, we take the limit as $q\to 1$ for appropriate choices of the parameters, to end up with a Selberg like multivariate beta integral. 

The resulting classical integrals were studied before in \cite{Kaneko} and \cite{Yan}. It is shown there that the integrals have series expressions which are a generalization of Gauss' hypergeometric function ${}_2F_1$. In particular the first of the two identities in Corollary \ref{corclass} appears in \cite{Yan}
as a generalization of Euler's transformations for a ${}_2F_1$. The second identity appears to be new.

We need $|m-n|\leq 1$ in order to be able to take the desired limit in the proof. Thus we only have the two cases with $m=n$ and $m+1=n$, which are explicitly given in the corollary below. While we can take a formal limit if
$|m-n|>1$, the resulting integrals do not converge, so we do not see an obvious extension of these results to a transformation between two integrals with completely unrelated numbers of integration variables. 

In this section we write $\Gamma_e$ for the elliptic gamma function, and $\Gamma_c$ for the classical gamma function.
\begin{cor}\label{corclass}
Let $a_0<a_1$ be real and suppose $b_r \in \mathbb{R} \backslash [a_0,a_1]$. Let $0<\Re(\alpha_0), \Re(\alpha_1)$ and
$\tau = (\alpha_0+\alpha_1)/2$
We have
\begin{align*}
\prod_{r=0}^1 & \prod_{i=0}^{2n-1} |a_r - b_i|^{\tau-\alpha_r} 
 \int_{[a_0,a_1]^n} \prod_{1\leq j<k\leq n} 
|x_i-x_j|^{2\tau} \prod_{i=1}^n \frac{|x_i-a_0|^{\alpha_0-1} |x_i-a_1|^{\alpha_1-1}}{\prod_{r=0}^{2n-1} |x_i-b_r|^{\tau}} dx_i
\\ & =
 \int_{[a_0,a_1]^n} \prod_{1\leq j<k\leq n} 
|x_i-x_j|^{2\tau} \prod_{i=1}^n \frac{|x_i-a_0|^{\alpha_1-1} |x_i-a_1|^{\alpha_0-1}}{\prod_{r=0}^{2n-1} |x_i-b_r|^{\tau}} dx_i.
\end{align*}
Under the same conditions as before, but now with balancing condition $\tau = \alpha_0+\alpha_1$, we obtain
\begin{align*} 
\frac{1}{n!} &
 \int_{[a_0,a_1]^n} \prod_{1\leq j<k\leq n} 
|x_i-x_j|^{2\tau} \prod_{i=1}^n \frac{|x_i-a_0|^{\alpha_0-1} |x_i-a_1|^{\alpha_1-1}}{\prod_{r=0}^{2n-2} |x_i-b_r|^{\tau}} dx_i
\\ & =
\frac{\Gamma_c(\alpha_0, \alpha_1)}{\Gamma_c(\tau)} 
\frac{|a_0-a_1|^{\tau-1}}{ \prod_{i=0}^{2n-2} |a_0 - b_i|^{\alpha_1} |a_1 - b_i|^{\alpha_0} }
\\ & \qquad \times 
\frac{1}{(n-1)!} 
 \int_{[a_0,a_1]^{n-1}} \prod_{1\leq j<k\leq n-1} 
|x_i-x_j|^{2\tau} \prod_{i=1}^{n-1}m \frac{|x_i-a_0|^{2\tau-\alpha_0-1} |x_i-a_1|^{2\tau - \alpha_1-1}}{\prod_{r=0}^{2n-2} |x_i-b_r|^{\tau}} dx_i.
\end{align*}
\end{cor}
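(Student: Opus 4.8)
The plan is to obtain Corollary \ref{corclass} as the $q\to1$ limit of the basic hypergeometric identity in Corollary \ref{corbh1}, specialized to the symmetric ($n=m$) and near-symmetric ($n=m+1$) cases respectively. First I would set $q=e^{-\epsilon}$ and scale all parameters so that $t$, $t_r$ and $v_r$ approach $1$ at prescribed rates: writing $t=q^{2\tau}$, the cross term $\tilde\Delta_{\II}^{(n)}(t;z)$ with $z_j=q^{x_j}$ degenerates (after the standard substitution turning $(q^a;q)/(q^b;q)$ into a ratio of classical gamma functions via $\lim_{q\to1}(q^a;q)/(1-q)^? = 1/\Gamma_c(a)$-type asymptotics) into the Selberg-type weight $\prod_{j<k}|x_j-x_k|^{2\tau}$ on a real interval, and the single-variable factors $1/(t_r z_i^{\pm1};q)$ and $(tv_rz_i^{\pm1};q)/(v_rz_i^{\pm1};q)$ become powers $|x_i-a_r|^{\alpha_r-1}$ and $|x_i-b_r|^{-\tau}$ after identifying the $t_r$ with the endpoints $a_0,a_1$ and the $v_r$ with the ``$b$'' points $b_r$. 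The endpoints of the interval $[a_0,a_1]$ arise from the locations of the pinching poles of the integrand as $q\to1$, exactly as in the classical Selberg integral derivation.

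Next I would match the balancing conditions: the elliptic/basic condition $t_0t_1t_2t_3=t^{2+m-n}$ becomes, under $t_r=q^{\cdots}$, a linear relation among the exponents, which for $n=m$ forces $\alpha_0+\alpha_1 = 2\tau$ (the first displayed identity, with $k=2n$ of the $v_r$) and for $n=m+1$ forces $\alpha_0+\alpha_1=\tau$ after reindexing (the second identity, with $k=2n-1$). The prefactors on the right-hand side of Corollary \ref{corbh1} — namely $\prod_{i=m+1}^n\prod_{r<s}1/(t_rt_st^{n-i};q)$ and $\prod_i\prod_r (tv_r/t_r;q)/(t_rv_r;q)$ — must then be expanded asymptotically. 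In the $n=m$ case the first product is empty and the $v$-product collapses to the factor $\prod_{r,i}|a_r-b_i|^{\tau-\alpha_r}$ after using $(q^Av;q)/(q^Bv;q)\to |{\cdot}|^{B-A}$ type limits; in the $n=m+1$ case the extra single factor from $i=m+1=n$ produces precisely the classical combination $\Gamma_c(\alpha_0,\alpha_1)/\Gamma_c(\tau)\cdot|a_0-a_1|^{\tau-1}/\prod_i|a_0-b_i|^{\alpha_1}|a_1-b_i|^{\alpha_0}$, and the right-hand integral's single-variable exponents $t/t_r$ become $2\tau-\alpha_r-1$, matching the stated $(n-1)$-dimensional integral.

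The steps, in order: (1) fix the substitution $q=e^{-\epsilon}$, $t=q^{2\tau}$, $t_0=q^{\alpha_0}$-type and $v_r$-type scalings, and the contour substitution $z_j=q^{x_j}$ deforming the unit-circle contours of Corollary \ref{corbh1} onto $[a_0,a_1]$; (2) check that, for parameters in the ranges allowed by Corollary \ref{corbh1}, the contours can be taken fixed and the integrand converges uniformly on compacta so that limit and integral commute (this is the standard justification, essentially as in the classical Selberg limit and as used for Corollary \ref{corbh1} itself); (3) compute the pointwise limit of the integrand using $\lim_{q\to1}(q^a x;q)_\infty/(q^bx;q)_\infty$ asymptotics to get the Selberg weight times the $|x_i-a_r|$ and $|x_i-b_r|$ powers; (4) compute the limit of the explicit prefactor; (5) read off the two identities for $n=m$ and $n=m+1$; (6) conclude by analytic continuation in $a_0,a_1,b_r,\alpha_0,\alpha_1$ to the stated real ranges. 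The main obstacle will be step (3) together with (4): the various $q$-Pochhammer factors carry divergent powers of $(1-q)$ and of $\epsilon$, and one must verify that all these divergences cancel between the two sides (they are forced to by the balancing condition), while simultaneously tracking the finitely many surviving $\Gamma_c$ factors and absolute-value powers — bookkeeping that is routine in principle but where sign conventions ($|x_i-a_0|$ versus $(a_1-x_i)$) and the precise exponent shifts ($\alpha_r-1$ versus $\alpha_r$, $\tau$ versus $\tau-1$) are easy to get wrong, and where one must also confirm the pinching of poles really produces the interval $[a_0,a_1]$ and not a larger or empty region.
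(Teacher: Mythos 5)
Your route is genuinely different from the paper's: you propose to degenerate the already--trigonometric identity of Corollary \ref{corbh1} by letting $q\to1$, whereas the paper takes $q\to1$ directly in the elliptic identity of Theorem \ref{thmmain} with $p$ held fixed, invoking Rains' limit theorems (Theorems 2.13 and 7.4 of \cite{Rlimits}) to control the limit and then transporting the resulting theta-function integrals to beta integrals via the coordinate change $\phi(z)=-\theta(z;p)^2/\theta(-z;p)^2$. The whole point of that detour through $p\neq0$ is that it outsources the one genuinely hard analytic step --- the interchange of the $q\to1$ limit with the integral when numerator and denominator of the integrand individually blow up like $\exp(c/(1-q))$ --- to an already-proven theorem. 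Your sketch declares this step ``standard'' and ``routine in principle,'' but it is the crux, and in the $p=0$ setting it is more delicate, not less.

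Two concrete gaps. First, the substitution $z_j=q^{x_j}$ is not the right degeneration for producing a Selberg-type integral on a finite interval: with $q=e^{-\epsilon}$ it pushes the unit-circle contour onto a segment of the imaginary axis of length $2\pi/\epsilon$ and leads to Mellin--Barnes-type limits over a line, not to $\int_{[a_0,a_1]^n}$. The interval does not arise from ``pinching poles'': in the correct trigonometric picture the contour stays the unit circle, one folds $z\leftrightarrow z^{-1}$ via $x=(z+z^{-1})/2$, and the interval is the image $[-1,1]$ of the circle. Second, and more seriously, the exponentially divergent parts of the $q$-Pochhammer factors (the $\mathrm{Li}_2$-type exponents of $(u;q)_\infty$) cancel against the numerator $(z_i^{\pm2};q)$ of $\tilde\Delta_{\II}$ only when the four parameters $t_r$ degenerate in pairs onto $\pm1$; they do \emph{not} cancel for $t_r\to q^{\ast}a_0^{\pm1}, q^{\ast}a_1^{\pm1}$ with general points $a_0,a_1$ on the circle. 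So your plan of ``identifying the $t_r$ with the endpoints $a_0,a_1$'' inside the limit cannot work as stated; one must first prove the identity for endpoints $\mp1$ and only afterwards reach general $a_0<a_1$ (and $b_r=(v_r+v_r^{-1})/2\in\mathbb{R}\setminus[-1,1]$) by an affine change of variables --- the analogue of the linear fractional transformation the paper applies at the end. Minor but symptomatic of the same bookkeeping risk you yourself flag: the cross term requires $t=q^{\tau}$, not $q^{2\tau}$ (the exponent $2\tau$ in $|x_i-x_j|^{2\tau}$ comes from the four $\pm$ factors collapsing to a square), and the $v_r$ should be held fixed rather than scaled with $q$. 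Until the divergence cancellation and the limit--integral interchange are actually carried out (or replaced by a citation doing it), the proposal is a plausible program rather than a proof.
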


\begin{proof}
We first prove both identities for $0\leq a_0<a_1$ both real, $0<\Re(\alpha_0), \Re(\alpha_1)$ and $b_r\in \mathbb{R}_{<0}$ (and the appropriate balancing condition involving $\alpha_0$, $\alpha_1$ and $\tau$). The more general parameter conditions stated in the theorem can then be obtained by applying an appropriate linear fractional transformation to the integration variables.

We change the parameters in Theorem \ref{thmmain} to 
$t_0 = q^{\alpha_0^+} a_0$, $t_1 = q^{\alpha_0^-} /a_0$, $t_2=q^{\alpha_1^+}a_1$, $t_3=q^{\alpha_1^-} /a_1$ and
$t=q^{\tau}$ (so the balancing condition becomes $\alpha_0+\alpha_1= (2+m-n)\tau$, where $\alpha_r=\alpha_r^+ + \alpha_r^-$), 
and $v_{2r} = \sqrt{pq} q^{-\beta_r^+}/b_r$, $v_{2r+1} =  \sqrt{pq} q^{-\beta_r^-} b_r$, so $\beta_r^++\beta_r^{-1}=\tau$. Here we take $a_0$, $a_1$ and $b_r$ on the unit circle, with 
$0\leq \arg(a_0)\leq \arg(a_1)\leq \pi$ and we impose the conditions $0<\Re(\alpha_0^{\pm }), \Re(\alpha_1^{\pm})< \Re(\tau)$. Finally we set $q=\exp(2\pi i v w)$ for some $\omega$ in the upper half plane (so $|q|<1$) and $v>0$ real. Then 
using \cite[Theorem 7.4]{Rlimits} we obtain for the left hand side ($LHS$) of Theorem \ref{thmmain} that
\begin{align*}
\lim_{v \to 0^+} &
\prod_{j=0}^{n-1} \frac{\Gamma_e(t^{(n+m-j)})}{\Gamma_e(t^{j+1}) \prod_{0\leq r<s\leq 3} \Gamma_e(t^j t_rt_s)}
LHS \\ &=
| \theta(a_0a_1^{\pm 1};p)|^{n-n(m+1)\tau}
\prod_{j=0}^{n-1} \frac{\Gamma_c((m+n-j)\tau ,\tau)}
{\Gamma_c((j+1)\tau, j\tau+\alpha_0,j\tau+\alpha_1)}
\frac{(2\pi (p;p)^2)^n}{n!} 
\\ & \qquad \times \int_{[a_0,a_1]^n} \prod_{1\leq j<k\leq n} |\theta(z_iz_j^{\pm 1};p)|^{2\tau}
\prod_{i=1}^n \frac{|\theta(a_0z_i^{\pm 1};p)|^{\alpha_0-1} |\theta(a_1z_i^{\pm 1};p)|^{\alpha_1-1}}
{\prod_{r=0}^{k-1} |\theta(p^{1/2} b_r z_i^{\pm 1};p)|^{\tau} }
|\theta(z_i^2;p)| \frac{dz_i}{2\pi i z_i}
\end{align*}
Similarly we see for the right hand side ($RHS$) we see that
\begin{align*}
\lim_{v \to 0^+} &
\frac{1}{\prod_{i=m+1}^n \prod_{0\leq r<s\leq 3}\Gamma_e(t_rt_st^{n-i}) \prod_{i=0}^{2k-1} \prod_{r=0}^3 \Gamma_e(t_rv_i)}
\prod_{j=0}^{m-1} \frac{\Gamma_e(t^{(n+m-j)})}{\Gamma_e(t^{j+1}) \prod_{0\leq r<s\leq 3} \Gamma_e(t^{j+2}/ t_rt_s)}
RHS \\ &=
| \theta(a_0a_1^{\pm 1};p)|^{m-m(n+1)\tau}
\prod_{j=0}^{m-1} \frac{\Gamma_c((m+n-j)\tau ,\tau)}
{\Gamma_c((j+1)\tau, (j+2)\tau-\alpha_0,(j+2)\tau-\alpha_1)}
\frac{(2\pi (p;p)^2)^m}{m!} 
\\ & \qquad \times \int_{[a_0,a_1]^m} \prod_{1\leq j<k\leq m} |\theta(z_iz_j^{\pm 1};p)|^{2\tau}
\prod_{i=1}^m \frac{|\theta(a_0z_i^{\pm 1};p)|^{2\tau-\alpha_0-1} |\theta(a_1z_i^{\pm 1};p)|^{2\tau-\alpha_1-1}}
{\prod_{r=0}^{k-1} |\theta(p^{1/2} b_r z_i^{\pm 1};p)|^{\tau} }
|\theta(z_i^2;p)| \frac{dz_i}{2\pi i z_i}
\end{align*}

Now note the two equations (to prove the second equation we use the balancing condition $t_0t_1t_2t_3=t^{2+m-n}$)
\begin{align*}
\prod_{j=0}^{m-1} \frac{\Gamma(t^{n+m-j})}{\Gamma_e(t^{j+1})} & = 
\prod_{j=0}^{n-1} \frac{\Gamma(t^{n+m-j})}{\Gamma_e(t^{j+1})} \\
\prod_{0\leq r<s\leq 3} \prod_{j=0}^{n-1}  \Gamma_e(t^j t_rt_s) &=
\prod_{0\leq r<s\leq 3} \prod_{i=m+1}^n \Gamma_e(t^{n-i}t_rt_s) \prod_{i=0}^{m-1} \Gamma_e(t^{j+2}/t_rt_s).
\end{align*}
The difference between the two rescaling coefficients is thus given by $\prod_{i,r} \Gamma_e(t_rv_i)$. The limit of this term can be determined using \cite[Theorem 2.13]{Rlimits}. Indeed we have
\[
\lim_{v\to 0^+} \prod_{r=0}^3 \prod_{i=0}^{2k-1} \Gamma_e(t_rv_i) = 
\prod_{r=0}^1 \prod_{i=0}^{k-1} |\theta(a_ib_r^{\pm 1} \sqrt{p};p)|^{\alpha_i-\tau}
\]
Combining the limits we obtain the equation
\begin{align*}
\prod_{r=0}^1 \prod_{i=0}^{k-1} &|\theta(a_ib_r^{\pm 1} \sqrt{p};p)|^{
\tau-\alpha_i}
| \theta(a_0a_1^{\pm 1};p)|^{n-n(m+1)\tau} 
\prod_{j=0}^{n-1} \frac{\Gamma_c((m+n-j)\tau ,\tau)}
{\Gamma_c((j+1)\tau, j\tau+\alpha_0,j\tau+\alpha_1)}
\frac{(2\pi (p;p)^2)^n}{n!} 
\\ & \qquad \times \int_{[a_0,a_1]^n} \prod_{1\leq j<k\leq n} |\theta(z_iz_j^{\pm 1};p)|^{2\tau}
\prod_{i=1}^n \frac{|\theta(a_0z_i^{\pm 1};p)|^{\alpha_0-1} |\theta(a_1z_i^{\pm 1};p)|^{\alpha_1-1}}
{\prod_{r=0}^{k-1} |\theta(p^{1/2} b_r z_i^{\pm 1};p)|^{\tau} }
|\theta(z_i^2;p)| \frac{dz_i}{2\pi i z_i}
\\ & =
| \theta(a_0a_1^{\pm 1};p)|^{m-m(n+1)\tau}
\prod_{j=0}^{m-1} \frac{\Gamma_c((m+n-j)\tau ,\tau)}
{\Gamma_c((j+1)\tau, (j+2)\tau-\alpha_0,(j+2)\tau-\alpha_1)}
\frac{(2\pi (p;p)^2)^m}{m!} 
\\ & \qquad \times \int_{[a_0,a_1]^m} \prod_{1\leq j<k\leq m} |\theta(z_iz_j^{\pm 1};p)|^{2\tau}
\prod_{i=1}^m \frac{|\theta(a_0z_i^{\pm 1};p)|^{2\tau-\alpha_0-1} |\theta(a_1z_i^{\pm 1};p)|^{2\tau-\alpha_1-1}}
{\prod_{r=0}^{k-1} |\theta(p^{1/2} b_r z_i^{\pm 1};p)|^{\tau} }
|\theta(z_i^2;p)| \frac{dz_i}{2\pi i z_i}
\end{align*}
Using the coordinate transformation $x_i=\phi(z_i) = -\frac{\theta(z_i;p)^2}{\theta(-z_i;p)^2}$ as in the discussion after \cite[Theorem 7.2]{Rlimits} we obtain the identity
\begin{align*}
|\phi(a_0)&-\phi(a_1)|^{(n-m)(1-\tau)}  \prod_{r=0}^1 \prod_{i=0}^{k-1} 
|\phi(a_i) -\phi(\sqrt{p} b_r)|^{\tau-\alpha_i} 
\\& \qquad \times 
\prod_{j=0}^{n-1} \frac{\Gamma_c((m+n-j)\tau ,\tau)}
{\Gamma_c((j+1)\tau, j\tau+\alpha_0,j\tau+\alpha_1)}
\\ & \qquad \times 
\frac{1}{n!} 
 \int_{[\phi(a_0),\phi(a_1)]^n} \prod_{1\leq j<k\leq n} 
|x_i-x_j|^{2\tau} \prod_{i=1}^n \frac{|x_i-\phi(a_0)|^{\alpha_0-1} |x_i-\phi(a_1)|^{\alpha_1-1}}{\prod_{r=0}^{k-1} |x_i-\phi(p^{1/2}b_r)|^{\tau}} dx_i
\\ & =
\prod_{j=0}^{m-1} \frac{\Gamma_c((m+n-j)\tau ,\tau)}
{\Gamma_c((j+1)\tau, (j+2)\tau-\alpha_0,(j+2)\tau-\alpha_1)}
\\ & \qquad \times 
\frac{1}{m!} 
 \int_{[\phi(a_0),\phi(a_1)]^m} \prod_{1\leq j<k\leq m} 
|x_i-x_j|^{2\tau} \prod_{i=1}^m \frac{|x_i-\phi(a_0)|^{2\tau-\alpha_0-1} |x_i-\phi(a_1)|^{2\tau - \alpha_1-1}}{\prod_{r=0}^{k-1} |x_i-\phi(p^{1/2}b_r)|^{\tau}} dx_i
\end{align*}
Simplifying this expression (and renaming the $\phi(a_r)$ and $\phi(p^{1/2}b_r)$'s) gives the desired expressions. Note that $\phi(z)\geq 0$ if $z=e^{i\theta}$ for $0\leq \theta<\pi$, while $\phi(p^{1/2}z)< 0$ if $|z|=1$, which gives the parameter conditions. 
\end{proof}

\end{document}